\documentclass[11pt,a4paper]{amsart}
\usepackage{mathrsfs}
\usepackage{syntonly}
\usepackage{amsmath}
\usepackage{amsthm}
\usepackage{amsfonts}
\usepackage{amssymb}
\usepackage{latexsym}
\usepackage{amssymb,amsopn,amsmath,amsthm,graphics,amsfonts,mathrsfs,accents,enumerate,verbatim,calc}
\usepackage[dvips]{graphicx}
\usepackage[colorlinks=true,linkcolor=blue,citecolor=blue]{hyperref}
\usepackage{mathtools}
\usepackage{tikz}
\usepackage[all]{xy}
\usepackage{cleveref}
\usepackage{tkz-euclide}
\usepackage{tikz-cd}
\usetikzlibrary{cd, arrows, arrows.meta, decorations.pathmorphing}

\date{}
\allowdisplaybreaks[4] \footskip=15pt
\renewcommand{\uppercasenonmath}[1]{}

\numberwithin{equation}{section}
\theoremstyle{plain}

\newtheorem{thm}{Theorem}[section]
\newtheorem{lem}[thm]{Lemma}
\newtheorem{cor}[thm]{Corollary}
\newtheorem{prop}[thm]{Proposition}

\newtheorem{definition}[thm]{Definition}

\newtheorem{example}[thm]{Example}
\newtheorem{question}[thm]{Question}
\newtheorem{property}[thm]{Property}
\newtheorem{properties}[thm]{Properties}
\newtheorem{observation}[thm]{Observation}

\newtheorem{remark}[thm]{Remark}

\newtheorem{Subprops}{}       

\newtheorem*{ack*}{ACKNOWLEDGEMENTS}

\newcommand{\pf}{\noindent\begin {proof}}
\newcommand{\epf}{\end{proof}}

\begin{document}
\begin{center}
	{\large  \bf  Mixing Extriangulated Model Structures}
	
	\vspace{0.5cm}  Junpeng Ren,  Xianhui Fu\footnote{Corresponding author.}\\
	\medskip

\end{center}

\bigskip
\centerline { \bf  Abstract}
\medskip

\leftskip10truemm \rightskip10truemm \noindent
\hspace{1em} Let $(\mathscr{C}, \mathbb{E}, \mathfrak{s})$ be a weakly idempotent complete extriangulated category. We generalize Cole's Theorem to construct a mixed admissible model structure $\mathcal{M}_m$ from two compatible admissible model structures relative to proper classes $\xi_1 \subseteq \xi_2$ of $\mathscr{C}$. We then explicitly characterize the cofibrant objects of $\mathcal{M}_m$. Finally, we apply these results to exact and triangulated categories, recovering and extending recent work on mixed model structures.\\[2mm]
{\bf Keywords:} Extriangulated categories, Proper classes, Mixed model structures. \\
{\bf 2010 Mathematics Subject Classification:} 18E30; 18E10; 16E05; 18G20; 18G35.

\leftskip0truemm \rightskip0truemm
\section { \bf Introduction}

Since their introduction by Quillen \cite{Quillen_1967}, model categories have
provided a fundamental framework for abstract homotopy theory. In
homological algebra, Hovey \cite{hovey2002cotorsion} established a
correspondence between abelian model structures and compatible complete
cotorsion pairs. This point of view was subsequently extended to exact
categories by Gillespie \cite{gillespie2011model} and has become an
important method for constructing homological model structures.

A fundamental construction in model category theory is Cole's mixing
theorem \cite{cole2006mixing}, which combines two compatible model
structures into a new one. More recently, Gillespie \cite{Gillespie_2025}
gave an explicit formulation of this construction for exact model
structures. Roughly speaking, one retains the weak equivalences from one
model structure and the fibrations, or dually the cofibrations, from the
other under suitable compatibility conditions.

Extriangulated categories, introduced by Nakaoka and Palu \cite{NP},
simultaneously generalize exact categories, triangulated categories, and
extension-closed subcategories of triangulated categories. In this setting,
proper classes of $\mathbb{E}$-triangles provide a natural framework for
relative homological algebra. More precisely, a proper class $\xi$
determines a relative extension bifunctor $\mathbb{E}_{\xi}$ and hence a
relative extriangulated structure on the same underlying additive category;
see, for example, \cite{hu2020proper}. It is therefore natural to study
cotorsion pairs and admissible model structures relative to different
classes of extensions.

Proper classes are especially natural in triangulated categories. For
instance, in a compactly generated triangulated category, pure triangles
and pure-injective objects are detected through compact objects and the
associated functor categories. Krause's work on triangulated purity and
smashing subcategories \cite{KrauseSmashing,KrauseCoherent} shows that such
relative extension theories are intrinsic to triangulated homological
algebra rather than merely formal variants of exact structures. Thus the
proper-class setting includes important extriangulated structures which are
not themselves obtained by simply taking all short exact sequences or all
distinguished triangles.

The aim of this paper is to extend Cole's mixing construction to admissible
model structures relative to proper classes in extriangulated categories.
The essential point is that the two model structures need not be defined
with respect to the same extension structure. Let
$(\mathscr{C},\mathbb{E},\mathfrak{s})$ be a weakly idempotent complete
extriangulated category and let
$
\xi_1\subseteq\xi_2
$
be two proper classes of $\mathbb{E}$-triangles. Suppose that
\[
\mathcal{M}_1=(\mathcal{C}_1,\mathcal{W}_1,\mathcal{F}_1)
\quad\text{and}\quad
\mathcal{M}_2=(\mathcal{C}_2,\mathcal{W}_2,\mathcal{F}_2)
\]
are admissible model structures relative to $\xi_1$ and $\xi_2$,
respectively, with $\mathcal{W}_1\subseteq\mathcal{W}_2$. If
$\mathcal{F}_1\subseteq\mathcal{F}_2$, we prove that there exists an
admissible model structure
$
\mathcal{M}_m=(\mathcal{C}_m,\mathcal{W}_2,\mathcal{F}_1)
$
relative to $\xi_1$. Dually, if $\mathcal{C}_1\subseteq\mathcal{C}_2$,
there exists a mixed admissible model structure
$
\mathcal{M}_m=(\mathcal{C}_1,\mathcal{W}_2,\mathcal{F}_m)
$
relative to $\xi_1$; see Theorem~\ref{prop:key_prop}.

This result is not a formal replacement of short exact sequences by
$\mathbb{E}$-triangles. The inclusion $\xi_1\subseteq\xi_2$ carries two
relative extension bifunctors
$
\mathbb{E}_{\xi_1}\quad\text{and}\quad\mathbb{E}_{\xi_2},
$
and hence, in general, two different notions of orthogonality and two
relative cotorsion theories. The main difficulty is to compare the
cotorsion pairs arising from these two theories and to prove completeness
of the mixed cotorsion pair. Lemma~\ref{lem:mixed-approximation} supplies
the required mixed $3\times3$ construction, in which $\xi_1$- and
$\xi_2$-triangles occur simultaneously. Lemma~\ref{lem:extension-comparison}
then shows that, for objects in $\mathcal{C}_2$, the relevant
$\xi_2$-extensions are already $\xi_1$-extensions. Together these two
comparison steps bridge the two orthogonality relations and constitute the
technical core of the proof.

We also compare the three model structures at the level of morphisms. In
particular, the weak equivalences of the mixed model structure coincide
with those of $\mathcal{M}_2$. More importantly,
Proposition~\ref{prop:cofibrant-characterization} gives an intrinsic
description of the mixed cofibrant objects: an object $C$ belongs to
$\mathcal{C}_m$ if and only if $C\in\mathcal{C}_1$ and there exists a
$1$-weak equivalence
\[
C_2\longrightarrow C
\]
for some $C_2\in\mathcal{C}_2$. Moreover, this weak equivalence can be
chosen to be a $\xi_2$-deflation. This characterization describes
$\mathcal{C}_m$ concretely in terms of the two original model structures,
beyond its definition by relative orthogonality.

Finally, we specialize the construction to exact and triangulated
categories. In the exact case, our result recovers the corresponding
mixing theorem for exact model structures. In the triangulated setting,
proper classes of triangles, including classes motivated by purity,
provide natural relative extriangulated structures to which the main
theorem applies. We also construct an explicit non-degenerate example
from a strict chain of proper classes associated with smashing direct
summands in a finite product derived category.

The paper is organized as follows. Section 2 recalls extriangulated
categories, proper classes, relative cotorsion pairs, and admissible model
structures. Section 3 proves the main mixing theorem and characterizes the
mixed cofibrant objects. Section 4 discusses the exact and triangulated
cases and gives examples arising from proper classes of triangles.

\section{\bf Preliminaries}

\subsection{Extriangulated categories}
Let us recall some notions concerning extriangulated categories from \cite{NP}.

Let $\mathbb{E}:\mathscr{C}^{op}\times\mathscr{C}\rightarrow Ab$ be a biadditive functor, where $Ab$ is the category of abelian groups. For any pair of objects $A,C\in\mathscr{C}$, an element $\delta\in\mathbb{E}(C,A)$ is called an
$\mathbb{E}$-{\em extension}. The zero element $0\in\mathbb{E}(C,A)$ is called the {\em split} $\mathbb{E}$-{\em extension}. For any morphism $a\in\mathscr{C}(A,A')$ and $c\in\mathscr{C}(C',C)$, we have the following $\mathbb{E}$-extensions
$$\mathbb{E}(C,a)(\delta)\in\mathbb{E}(C,A')~\text{and}~\mathbb{E}(c,A)(\delta)\in\mathbb{E}(C',A),$$
which are denoted by $a_*\delta$ and $c^*\delta$, respectively.
A morphism $(a,c):\delta\rightarrow \delta'$ of $\mathbb{E}$-extensions $\delta\in\mathbb{E}(C,A)$ and $\delta'\in\mathbb{E}(C',A')$ is a pair of morphisms $a\in\mathscr{C}(A,A')$ and $c\in\mathscr{C}(C,C')$ such that $a_*\delta=c^*\delta'$.
Two sequences of morphisms $A\stackrel{x}{\longrightarrow}B\stackrel{y}{\longrightarrow}C$ and $A\stackrel{x'}{\longrightarrow}B'\stackrel{y'}{\longrightarrow}C$ in $\mathscr{C}$ are said to be {\em equivalent} if there exists an isomorphism $b\in\mathscr{C}(B,B')$ such that the following diagram is commutative
$$\xymatrix{A \ar[r]^x \ar@{=}[d]& B\ar[r]^y \ar[d]^b_{\simeq}&C\ar@{=}[d]\\
	A\ar[r]^{x'}&B'\ar[r]^{y'}&C.}$$
We denote the equivalence class of $A\stackrel{x}{\longrightarrow}B\stackrel{y}{\longrightarrow}C$ by $[A\stackrel{x}{\longrightarrow}B\stackrel{y}{\longrightarrow}C]$, and for any $A,C\in\mathscr{C}$, denote as
\[
0=
[
\xymatrix@C=1.8em{
	A \ar[r]^-{\binom{1}{0}}
	& A\oplus C \ar[r]^-{(0\quad1)}
	& C
}
].
\]

\begin{definition}
	\cite[Definition 2.9]{NP} Let $\mathfrak{s}$ be a correspondence, which associates an equivalence class $\mathfrak{s}(\delta)=[A\stackrel{x}{\longrightarrow}B\stackrel{y}{\longrightarrow}C]$ to each $\mathbb{E}$-extension $\delta\in\mathbb{E}(C,A)$. This $\mathfrak{s}$ is called a {\em realization} of $\mathbb{E}$ if for any morphism $(a,c):\delta\rightarrow \delta'$ with $\mathfrak{s}(\delta)=[A\stackrel{x}{\longrightarrow}B\stackrel{y}{\longrightarrow}C]$ and $\mathfrak{s}(\delta')=[A'\stackrel{x'}{\longrightarrow}B'\stackrel{y'}{\longrightarrow}C']$, there is a commutative diagram as follows:
	$$\xymatrix{A \ar[r]^x \ar[d]^a& B\ar[r]^y \ar[d]^b&C\ar[d]^c\\
		A'\ar[r]^{x'}&B'\ar[r]^{y'}&C'.}
	$$
	A realization $\mathfrak{s}$ of $\mathbb{E}$ is said to be {\em additive} if the following conditions are satisfied:
	
	\;(a) For any $A,C\in\mathscr{C}$, the split $\mathbb{E}$-extension $0\in\mathbb{E}(C,A)$ satisfies $\mathfrak{s}(0)=0$.
	
	\;(b) $\mathfrak{s}(\delta\oplus \delta')=\mathfrak{s}(\delta)\oplus\mathfrak{s}(\delta')$ for any pair of $\mathbb{E}$-extensions $\delta$ and $\delta'$.
\end{definition}

An extriangulated category is a triple $(\mathscr{C},\mathbb{E},\mathfrak{s})$ consisting of the following data satisfying
certain axioms:

$(1)$ $\mathscr{C}$ is an additive category and $\mathbb{E}:\mathscr{C}^{op}\times\mathscr{C}\rightarrow Ab$ is a biadditive functor.

$(2)$ $\mathfrak{s}$ is an additive realization of $\mathbb{E}$, which defines the class of conflations satisfying the
axioms (ET1)-(ET4), (ET3)$^{\rm op}$ and (ET4)$^{\rm op}$ (see \cite[Definition 2.12]{NP} for details).

A sequence $A\stackrel{x}{\longrightarrow}B\stackrel{y}{\longrightarrow}C$ is called a {\em conflation} if it realizes some $\mathbb{E}$-extension $\delta\in\mathbb{E}(C,A)$. Then $x$ is called an {\em inflation} and $y$ is called a {\em deflation}, and $A\stackrel{x}{\longrightarrow}B\stackrel{y}{\longrightarrow}C\stackrel{\delta}{\dashrightarrow}$ is called an $\mathbb{E}$-{\em triangle}.
For an $\mathbb{E}$-triangle $A\stackrel{x}{\longrightarrow}B\stackrel{y}{\longrightarrow}C\stackrel{\delta}{\dashrightarrow}$, we denote $A={\rm CoCone}(y)$ and $C={\rm Cone}(x)$. An $\mathbb{E}$-triangle is {\em split} if it realizes $0$.\\

In order to develop relative homological frameworks—such as relative cotorsion pairs or specific model structures—within an extriangulated category, we naturally rely on the notion of proper classes. We recall the relevant definitions as follows.

\subsection{Proper classes on extriangulated categories}
\begin{lem}\label{lem1} \emph{(see \cite[Proposition 3.15]{NP})} Let $(\mathscr{C}, \mathbb{E},\mathfrak{s})$ be an extriangulated category. Then the following hold.
	
	\emph{(1)} Let $C$ be any object, and let $\xymatrix@C=2em{A_1\ar[r]^{x_1}&B_1\ar[r]^{y_1}&C\ar@{-->}[r]^{\delta_1}&}$ and $\xymatrix@C=2em{A_2\ar[r]^{x_2}&B_2\ar[r]^{y_2}&C\ar@{-->}[r]^{\delta_2}&}$ be any pair of $\mathbb{E}$-triangles. Then there is a commutative diagram
	in $\mathscr{C}$
	$$\xymatrix{
		& A_2\ar[d]_{m_2} \ar@{=}[r] & A_2 \ar[d]^{x_2} \\
		A_1 \ar@{=}[d] \ar[r]^{m_1} & M \ar[d]_{e_2} \ar[r]^{e_1} & B_2\ar[d]^{y_2} \\
		A_1 \ar[r]^{x_1} & B_1\ar[r]^{y_1} & C   }
	$$
	which satisfies $\mathfrak{s}(y^*_2\delta_1)=\xymatrix@C=2em{[A_1\ar[r]^{m_1}&M\ar[r]^{e_1}&B_2]}$ and
	$\mathfrak{s}(y^*_1\delta_2)=\xymatrix@C=2em{[A_2\ar[r]^{m_2}&M\ar[r]^{e_2}&B_1].}$
	
	\emph{(2)} Let $A$ be any object, and let $\xymatrix@C=2em{A\ar[r]^{x_1}&B_1\ar[r]^{y_1}&C_1\ar@{-->}[r]^{\delta_1}&}$ and $\xymatrix@C=2em{A\ar[r]^{x_2}&B_2\ar[r]^{y_2}&C_2\ar@{-->}[r]^{\delta_2}&}$ be any pair of $\mathbb{E}$-triangles. Then there is a commutative diagram
	in $\mathscr{C}$
	$$\xymatrix{
		A\ar[d]_{x_2} \ar[r]^{x_1} & B_1 \ar[d]^{m_2}\ar[r]^{y_1}&C_1\ar@{=}[d] \\
		B_2 \ar[d]_{y_2} \ar[r]^{m_1} & M \ar[d]^{e_2} \ar[r]^{e_1} & C_1 \\
		C_2 \ar@{=}[r] & C_2 &   }
	$$
	which satisfies $\mathfrak{s}(x_{2*}\delta_1)=\xymatrix@C=2em{[B_2\ar[r]^{m_1}&M\ar[r]^{e_1}&C_1]}$ and $\mathfrak{s}(x_{1*}\delta_2)=\xymatrix@C=2em{[B_1\ar[r]^{m_2}&M\ar[r]^{e_2}&C_2].}$
\end{lem}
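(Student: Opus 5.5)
This is Nakaoka--Palu's Proposition 3.15, so the plan is to derive it directly from the axioms (ET4)$^{\rm op}$ and (ET4). Part (2) is dual to part (1), so I will prove (1) in detail and then indicate the dualization.

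For (1), start from the two $\mathbb{E}$-triangles $A_1\stackrel{x_1}{\to}B_1\stackrel{y_1}{\to}C\stackrel{\delta_1}{\dashrightarrow}$ and $A_2\stackrel{x_2}{\to}B_2\stackrel{y_2}{\to}C\stackrel{\delta_2}{\dashrightarrow}$.

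1. Realize $y_2^*\delta_1\in\mathbb{E}(B_2,A_1)$ by (ET2) as $A_1\stackrel{m_1}{\to}M\stackrel{e_1}{\to}B_2$.
2. The pair $(1_{A_1},y_2)$ is a morphism of extensions $y_2^*\delta_1\to\delta_1$. By (ET3) (realization of morphisms) there is $e_2\colon M\to B_1$ with $e_2m_1=x_1$ and $y_1e_2=y_2e_1$. This gives the lower-left square and the right square of the diagram.
3. A priori this $e_2$ need not be the one that makes $A_2\to M\to B_1$ a conflation realizing $y_1^*\delta_2$. So I will instead construct $M$ and the maps via (ET4)$^{\rm op}$, applied to the composable deflations $M\stackrel{e_1}{\to}B_2\stackrel{y_2}{\to}C$. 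The composite $y_2e_1$ is a deflation, and (ET4)$^{\rm op}$ produces an $\mathbb{E}$-triangle $A_2\stackrel{m_2}{\to}M\stackrel{e_2}{\to}B_1$ together with $m_2$ satisfying $e_1m_2=x_2$. Its compatibility conditions identify the extension of that triangle as $y_1^*\delta_2$. Here one must first know that $y_2e_1$ is a deflation with cocone $A_1$-extended data, i.e.\ that $M\stackrel{y_2e_1}{\to}C$ fits into a conflation whose cocone is $B_1$-related.

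Step 3 is the main obstacle: applying (ET4)$^{\rm op}$ correctly and checking the extension identities $e_2^{*}\delta_1$-type equalities it provides. Concretely, the plan is to apply (ET4)$^{\rm op}$ to the deflations $e_1$ (cocone $A_1$) and $y_2$ (cocone $A_2$). This yields
\begin{itemize}
\item an object $E$ with a conflation $E\to M\stackrel{y_2e_1}{\to}C$;
\item a conflation $A_1\to E\to A_2$;
\item compatibilities such as $\delta_2$ pulled back and pushed forward appropriately.
\end{itemize}
We then need to compare $E\to M\to C$ with the sequence arising from $x_1$. To get the exact statement as stated, I would follow Nakaoka--Palu's proof verbatim: use (ET4)$^{\rm op}$, then (ET3)$^{\rm op}$, and verify via the identity $\mathfrak{s}(y_1^*\delta_2)$ by using the uniqueness of the realization up to equivalence.

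Part (2) follows by the same argument with pushouts $x_{2*}\delta_1$ and (ET4) replacing pullbacks and (ET4)$^{\rm op}$.
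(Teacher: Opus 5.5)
The paper gives no proof of this lemma; it quotes Nakaoka--Palu's Proposition 3.15. Your proposal has a genuine gap at the step that makes the statement nontrivial. That step is putting \emph{both} conflations $A_1\xrightarrow{m_1}M\xrightarrow{e_1}B_2$ (realizing $y_2^*\delta_1$) and $A_2\xrightarrow{m_2}M\xrightarrow{e_2}B_1$ (realizing $y_1^*\delta_2$) on the same object $M$, with $e_2m_1=x_1$, $e_1m_2=x_2$ and $y_1e_2=y_2e_1$.

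You rightly note that the $e_2$ of Step~2 need not work; it actually comes from the realization property of $\mathfrak{s}$, not from (ET3). But Step~3 never supplies a replacement, and its first assertion is false. Applied to $M\xrightarrow{e_1}B_2\xrightarrow{y_2}C$, $(\mathrm{ET4})^{\mathrm{op}}$ produces a conflation $E\xrightarrow{h'}M\xrightarrow{y_2e_1}C\overset{\delta''}{\dashrightarrow}$ and a conflation $A_1\xrightarrow{d}E\xrightarrow{t}A_2$ realizing $x_2^*y_2^*\delta_1$, with $t_*\delta''=\delta_2$ and $d_*y_2^*\delta_1=y_2^*\delta''$. Neither $B_1$ nor $y_1^*\delta_2$ appears, so no $\mathbb{E}$-triangle $A_2\to M\to B_1$ comes out.

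Your bullet list corrects this. However, the remaining instruction to ``compare $E\to M\to C$ with the sequence arising from $x_1$'' is the whole content of the lemma, and deferring to Nakaoka--Palu's proof is not an argument. To finish along your route you would still have to:
\begin{enumerate}
\item note that $x_2^*y_2^*\delta_1=(y_2x_2)^*\delta_1=0$, so $E\cong A_1\oplus A_2$;
\item choose the retraction $r$ of $d$ so that $r_*\delta''=\delta_1$: one has $y_2^*(r_*\delta''-\delta_1)=0$, hence $r_*\delta''-\delta_1=u_*\delta_2$ for some $u\colon A_2\to A_1$ by exactness of $\mathscr{C}(A_2,A_1)\to\mathbb{E}(C,A_1)\xrightarrow{y_2^*}\mathbb{E}(B_2,A_1)$, and $r-ut$ then works;
\item run a second octahedral argument to produce the conflation onto $B_1$.
\end{enumerate}

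A direct way to supply the missing step is as follows. Let $i_k,p_k$ be the canonical injections and projections of $A_1\oplus A_2$. Realize $\theta=i_{1*}\delta_1+i_{2*}\delta_2\in\mathbb{E}(C,A_1\oplus A_2)$ as $A_1\oplus A_2\xrightarrow{(m_1\ m_2)}M\xrightarrow{e}C$. Apply (ET4) to the split conflation $A_1\xrightarrow{i_1}A_1\oplus A_2\xrightarrow{p_2}A_2$ (realizing $0$) followed by this one. This gives $A_1\xrightarrow{m_1}M\xrightarrow{h}E\overset{\varepsilon}{\dashrightarrow}$ and $A_2\xrightarrow{d'}E\xrightarrow{e'}C$ realizing $p_{2*}\theta=\delta_2$, with $hm_1=0$, $hm_2=d'$ and $e'h=e$. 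Condition (iii) reads $i_{1*}\varepsilon=e'^*\theta$, so $\varepsilon=e'^*\delta_1$.

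Now take an isomorphism $\phi\colon E\to B_2$ of realizations of $\delta_2$, so $\phi d'=x_2$ and $y_2\phi=e'$, and set $e_1=\phi h$. Transporting along $\phi$ yields $\mathfrak{s}(y_2^*\delta_1)=[A_1\xrightarrow{m_1}M\xrightarrow{e_1}B_2]$ with $e_1m_2=x_2$ and $y_2e_1=e$. The same argument with $i_2$ gives $e_2$ with $\mathfrak{s}(y_1^*\delta_2)=[A_2\xrightarrow{m_2}M\xrightarrow{e_2}B_1]$, $e_2m_1=x_1$ and $y_1e_2=e$. Hence $y_1e_2=y_2e_1$ and all squares commute.

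Part (2) is dual. Realize $p_1^*\delta_1+p_2^*\delta_2\in\mathbb{E}(C_1\oplus C_2,A)$, with $p_k$ now the projections of $C_1\oplus C_2$, and use $(\mathrm{ET4})^{\mathrm{op}}$ with the split deflations $C_1\oplus C_2\to C_k$.
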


A class of $\mathbb{E}$-triangles $\xi$ is {\it closed under base change} if for any $\mathbb{E}$-triangle $$\xymatrix@C=2em{A\ar[r]^x&B\ar[r]^y&C\ar@{-->}[r]^{\delta}&\in\xi}$$ and any morphism $c\colon C' \to C$, then any $\mathbb{E}$-triangle  $\xymatrix@C=2em{A\ar[r]^{x'}&B'\ar[r]^{y'}&C'\ar@{-->}[r]^{c^*\delta}&}$ belongs to $\xi$.

Dually, a class of  $\mathbb{E}$-triangles $\xi$ is {\it closed under cobase change} if for any $\mathbb{E}$-triangle $$\xymatrix@C=2em{A\ar[r]^x&B\ar[r]^y&C\ar@{-->}[r]^{\delta}&\in\xi}$$ and any morphism $a\colon A \to A'$, then any $\mathbb{E}$-triangle  $\xymatrix@C=2em{A'\ar[r]^{x'}&B'\ar[r]^{y'}&C\ar@{-->}[r]^{a_*\delta}&}$ belongs to $\xi$.

A class of $\mathbb{E}$-triangles $\xi$ is called {\it saturated} if in the situation of Lemma \ref{lem1}(1), whenever {
	$\xymatrix@C=2em{A_2\ar[r]^{x_2}&B_2\ar[r]^{y_2}&C\ar@{-->}[r]^{\delta_2 }&}$
	and $\xymatrix@C=2em{A_1\ar[r]^{m_1}&M\ar[r]^{e_1}&B_2\ar@{-->}[r]^{y_2^{\ast}\delta_1}&}$ }
belong to $\xi$, then the  $\mathbb{E}$-triangle $\xymatrix@C=2em{A_1\ar[r]^{x_1}&B_1\ar[r]^{y_1}&C\ar@{-->}[r]^{\delta_1 }&}$  belongs to $\xi$.

An $\mathbb{E}$-triangle $\xymatrix@C=2em{A\ar[r]^x&B\ar[r]^y&C\ar@{-->}[r]^{\delta}&}$ is called {\it split} if $\delta=0$. It is easy to see that it is split if and only if $x$ is a section or $y$ is a retraction. The full subcategory  consisting of the split $\mathbb{E}$-triangles will be denoted by $\Delta_0$.

\begin{definition} \label{def:proper class}{\rm  Let $\xi$ be a class of $\mathbb{E}$-triangles which is closed under isomorphisms. $\xi$ is called a {\it proper class} of $\mathbb{E}$-triangles if the following conditions hold:
		
		(1) $\xi$ is closed under finite coproducts and $\Delta_0\subseteq \xi$.
		
		(2) $\xi$ is closed under base change and cobase change.
		
		(3) $\xi$ is saturated.}

			Let $\xi$ be a proper class of $\mathbb E$-triangles.
			An $\mathbb E$-triangle
			\[
			A\overset{x}{\longrightarrow}
			B\overset{y}{\longrightarrow}
			C\dashrightarrow
			\]
			belonging to $\xi$ is called a $\xi$-triangle.
			Accordingly, $x$ is called a $\xi$-inflation and
			$y$ is called a $\xi$-deflation.
	
	\end{definition}
	\begin{remark}
		Let $\xi$ be a class of $\mathbb{E}$-triangles satisfying the conditions (1) and (2) in Deﬁnition 3.1.
		
		Then $\xi$ is saturated if and only if for the diagram in Lemma 2.2(2),  if the $\mathbb{E}$-triangles $A \xrightarrow{x_2} B_2 \xrightarrow{y_2} C_2 \stackrel{\delta_2}{\dashrightarrow}$ and $B_2 \xrightarrow{m_1} M \xrightarrow{e_1} C_1 \stackrel{x_{2*}\delta_1}{\dashrightarrow}$ belong to $\xi$, then the $\mathbb{E}$-triangle $A \xrightarrow{x_1} B_1 \xrightarrow{y_1} C_1 \stackrel{\delta_1}{\dashrightarrow}$ also belongs to $\xi$.
	\end{remark}

The significance of a proper class lies in the fact that it naturally induces a new extriangulated structure on the underlying category. This is made precise by the following theorem.	
	
		\begin{thm} \emph{(see \cite[Theorem 3.2]{hu2020proper})} Let $\xi$ be a class of $\mathbb{E}$-triangles which is closed under isomorphisms.
			Set $\mathbb{E}_\xi:=\mathbb{E}|_\xi$, that is, $$\mathbb{E}_\xi(C, A)=\{\delta\in\mathbb{E}(C, A)~|~\delta~ \textrm{is realized as an $\mathbb{E}$-triangle}\xymatrix{A\ar[r]^x&B\ar[r]^y&C\ar@{-->}[r]^{\delta}&}~\textrm{in}~\xi\}$$ for any $A, C\in\mathcal{C}$, and $\mathfrak{s}_\xi:=\mathfrak{s}|_{\mathbb{E}_\xi}$. Then $\xi$ is a  proper class  of $\mathbb{E}$-triangles if and only if $(\mathcal{C}, \mathbb{E}_\xi, \mathfrak{s}_\xi)$ is an extriangulated category.
		\end{thm}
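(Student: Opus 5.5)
The plan is to treat $\mathbb{E}_\xi$ as a candidate sub-bifunctor of $\mathbb{E}$ and check the extriangulated axioms one at a time. The point to keep in mind throughout is that base change and cobase change correspond exactly to functoriality, coproducts and split triangles correspond to biadditivity, and saturation corresponds to (ET4)/(ET4)$^{\rm op}$.

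\emph{($\Rightarrow$)} Assume $\xi$ is proper.

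First, $\mathbb{E}_\xi(C,A)\subseteq \mathbb{E}(C,A)$ is stable under $a_*$ and $c^*$, by closure under cobase and base change. So $\mathbb{E}_\xi$ is a subfunctor, and any realization of $a_*\delta$ or $c^*\delta$ lies in $\xi$.

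Next, each $\mathbb{E}_\xi(C,A)$ is a subgroup. It contains $0$ since $\Delta_0\subseteq\xi$, and it is closed under negation since $-\delta=(-1_A)_*\delta$. For sums I would use $\delta+\delta'=(\nabla_A)_*\Delta_C^*(\delta\oplus\delta')$. Here $\delta\oplus\delta'$ is realized by the direct sum of the two realizations, which lies in $\xi$ by closure under finite coproducts together with additivity of $\mathfrak{s}$; then base and cobase change keep us in $\xi$.

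The realization $\mathfrak{s}_\xi$ is additive because $\mathfrak{s}$ is. Axioms (ET3) and (ET3)$^{\rm op}$ are inherited verbatim, since a morphism of $\mathbb{E}_\xi$-extensions is in particular a morphism of $\mathbb{E}$-extensions.

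For (ET4), take $\xi$-triangles $A\xrightarrow{f}B\xrightarrow{f'}D\overset{\delta}{\dashrightarrow}$ and $B\xrightarrow{g}C\xrightarrow{g'}F\overset{\delta'}{\dashrightarrow}$. Applying (ET4) in $\mathscr{C}$ produces an $\mathbb{E}$-triangle $A\xrightarrow{gf}C\xrightarrow{h'}E\overset{\delta''}{\dashrightarrow}$ and an $\mathbb{E}$-triangle $D\to E\to F\overset{f'_*\delta'}{\dashrightarrow}$. The latter is in $\xi$ by cobase change, and the compatibilities hold automatically. What remains is to show $\delta''\in\mathbb{E}_\xi$, which I would get from saturation in the form of the Remark (the dual of Lemma~\ref{lem1}(2)). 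I would apply Lemma~\ref{lem1}(2) to the pair consisting of the triangle realizing $\delta''$ and the $\xi$-triangle realizing $\delta$ (both start at $A$). After identifying the resulting middle triangle with the one realizing $f_*\delta''$, whose middle term is related to $C$ via $g$, the hypotheses of the Remark are exactly the $\xi$-triangles $\delta$ and $\delta'$. Hence $\delta''\in\xi$. (ET4)$^{\rm op}$ is dual, using saturation as defined via Lemma~\ref{lem1}(1).

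\emph{($\Leftarrow$)} Assume $(\mathscr{C},\mathbb{E}_\xi,\mathfrak{s}_\xi)$ is extriangulated.

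Biadditivity of $\mathbb{E}_\xi$ gives closure under base and cobase change, and closure under finite coproducts follows from $\mathfrak{s}_\xi(\delta\oplus\delta')=\mathfrak{s}_\xi(\delta)\oplus\mathfrak{s}_\xi(\delta')$. Since $0\in\mathbb{E}_\xi(C,A)$, we get $\Delta_0\subseteq\xi$.

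For saturation, suppose $\delta_2$ and $y_2^*\delta_1$ lie in $\xi$. Then $e_1$ and $y_2$ are $\xi$-deflations, so by (ET4)$^{\rm op}$ in $\mathbb{E}_\xi$ the composite $y_2e_1=y_1e_2$ is a $\xi$-deflation. I would then use (ET4)$^{\rm op}$ to factor this composite through $e_2$, and the resulting octahedral diagram to exhibit $\delta_1$ as an element of $\mathbb{E}_\xi(C,A_1)$.

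The main obstacle is the saturation step in both directions, in particular matching the diagrams of Lemma~\ref{lem1} with those of (ET4). I expect to need \cite[Proposition 3.15]{NP} together with the uniqueness-up-to-equivalence of realizations to make that identification rigorous.
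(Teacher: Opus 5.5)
The paper gives no proof of this statement; it quotes it from \cite[Theorem 3.2]{hu2020proper}, so your direct axiom check has to stand on its own. Most of it does. The following steps are all fine: the subfunctor and subgroup checks (including $\delta+\delta'=(\nabla_A)_*\Delta_C^*(\delta\oplus\delta')$), the inheritance of the realization axioms, (ET3) and (ET3)$^{\rm op}$, and in $(\Leftarrow)$ the closure under (co)base change, finite coproducts and $\Delta_0\subseteq\xi$.

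\textbf{The gap: saturation in $(\Leftarrow)$.} You correctly use (ET4)$^{\rm op}$ in $(\mathscr{C},\mathbb{E}_\xi,\mathfrak{s}_\xi)$ to get a $\xi$-triangle $N\xrightarrow{h'}M\xrightarrow{y_2e_1}C\overset{\delta''}{\dashrightarrow}$. You then never explain how this gives $\delta_1\in\mathbb{E}_\xi(C,A_1)$. The tools you name do not do it by themselves. Uniqueness of realizations says that one extension has essentially one conflation, whereas here you must compare \emph{different} extensions attached to related deflations.

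The missing idea is (ET3)$^{\rm op}$ in the ambient $(\mathscr{C},\mathbb{E},\mathfrak{s})$. Apply it to the square $y_1\circ e_2=1_C\circ(y_2e_1)$, going from the triangle above to $A_1\xrightarrow{x_1}B_1\xrightarrow{y_1}C\overset{\delta_1}{\dashrightarrow}$. It yields $a\colon N\to A_1$ with $a_*\delta''=\delta_1$. Closure of $\mathbb{E}_\xi$ under cobase change, which you already have, then gives $\delta_1\in\mathbb{E}_\xi$. No second octahedron is needed.

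If you do apply (ET4)$^{\rm op}$ in $\mathbb{E}$ to $y_1^*\delta_2$ and $\delta_1$, you only get $\delta_1=e'_*\delta'''$ for some $\mathbb{E}$-extension $\delta'''$ with deflation $y_1e_2$. You would still need the same (ET3)$^{\rm op}$ comparison with $\delta''$ (with $b=1_M$, $c=1_C$) to conclude $\delta'''\in\mathbb{E}_\xi$.

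\textbf{A smaller imprecision: (ET4) in $(\Rightarrow)$.} The hypothesis of the Remark you must verify is $f_*\delta''\in\mathbb{E}_\xi$. It is not a statement about $\delta$ and $\delta'$ themselves, and no ``identification'' of the middle row is needed, since Lemma~\ref{lem1}(2) already says that row realizes $f_*\delta''$. What you need is the (ET4) compatibility $f_*\delta''=e^*\delta'$, where $e$ is the map from the new object to $F$ in the (ET4) diagram, together with closure under base change. Dually, for (ET4)$^{\rm op}$ you use the compatibility $g^*\delta''=d_*\delta$ and cobase change, then saturation in the form of Lemma~\ref{lem1}(1). With these two points made explicit, your outline is a complete proof.
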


	\subsection{\texorpdfstring{Cotorsion pairs with respect to a proper class of $\mathbb{E}$-triangles}{Cotorsion pairs with respect to a proper class of E-triangles}}
	
	Let $(\mathscr{C}, \mathbb{E},\mathfrak{s})$ be an extriangulated category. For any class $\mathcal{X}$ and $\mathcal{Y}$ of objects of $\mathscr{C}$, we write $\mathbb{E}_{\xi}(\mathcal{X}, \mathcal{Y})=0$ provided that $\mathbb{E}_{\xi}(X, Y)=0$ for all $X\in\mathcal{X}$ and $Y\in\mathcal{Y}$, which means every $\mathbb{E}$-triangle  $\xymatrix@C=2em{Y\ar[r]&Z\ar[r]&X\ar@{-->}[r]^{\delta}&}$ in $\xi$  is split. Put
$$\mathcal{X}^{\perp_\xi}=\{Z\in\mathcal{C}\mid \mathbb{E}_{\xi}(X, Z)=0, \ \forall \ X\in\mathcal{X}\},  \mbox{and} \ \ ^{\perp_\xi}\mathcal{Y}=\{Z\in\mathcal{C}\mid \mathbb{E}_{\xi}(Z, Y)=0, \ \forall \ Y\in\mathcal{Y}\}.$$
\begin{definition} {\rm Assume that $(\mathscr{C}, \mathbb{E},\mathfrak{s})$ is an extriangulated category. Let $\mathcal{X},\mathcal{Y}\subseteq\mathscr{C}$ be a pair of full additive subcategories which are closed under isomorphisms and direct summands.
		
		(1) \ \ The pair $(\mathcal{X, Y})$ is  a {\it cotorsion pair} with respect to $\xi$, provided that $\mathcal{X}^{\perp_\xi}=\mathcal{Y}$ and $^{\perp_\xi}\mathcal{Y}=\mathcal{X}$.
		
		(2) \ \ A  cotorsion pair $(\mathcal{X}, \mathcal{Y})$ is  {\it complete} if it satisfies the following conditions:
		
		For any $C\in\mathcal{C}$, there are ${\xi}$-triangles
		$$\xymatrix{Y_C\ar[r]&X_C\ar[r]&C\ar@{-->}[r]&}~ \textrm{and} ~\xymatrix{C\ar[r]&Y^C\ar[r]&X^C\ar@{-->}[r]&}$$
		\noindent with $X_C\in\mathcal{X}, \ X^C\in\mathcal{X}$,  $Y_C\in\mathcal{Y}$, and $Y^C\in\mathcal{Y}$.
		
		(3) \ \ A cotorsion pair $(\mathcal{X}, \mathcal{Y})$ is  \emph{hereditary} if $\mathcal{X}$ is closed under cocones of $\xi$-deflations and $\mathcal{Y}$ is closed under cones of $\xi$-inflations.
		\vspace{1mm}}
\end{definition}
\subsection{Model structures} 
Before recalling the definition of a model structure, we note that a morphism $f:X\longrightarrow Y$ is a \emph{retract} of a morphism $g:X'\longrightarrow Y'$ provided that there is a commutative diagram:
\[\xymatrix@R=0.4cm{
	X \ar[r]^{\varphi_1} \ar[d]_{f} & X' \ar[r]^{\psi_1} \ar[d]_{g} & X \ar[d]^{f} \\ 
	Y \ar[r]^{\varphi_2} & Y' \ar[r]^{\psi_2}& Y
}\]
such that $\psi_1\varphi_1=\mathrm{Id}_X$ and $\psi_2\varphi_2=\mathrm{Id}_Y$.

\begin{definition}[{\cite{Quillen_1967}}]\label{def:model structure}
	A \emph{model structure} on a category $\mathscr{M}$ is a triple $(\mathrm{CoFib},\mathrm{Fib},\mathrm{Weq})$ of classes of morphisms, in which the morphisms are called \emph{cofibrations}, \emph{fibrations}, and \emph{weak equivalences}, respectively, satisfying the following axioms:
	\begin{enumerate}
		\item[(CM1)] (Two out of three axiom) Let $X\stackrel{f}{\longrightarrow} Y\stackrel{g}{\longrightarrow} Z$ be morphisms in $\mathscr{M}$. If two of the morphisms $f, g, gf$ are weak equivalences, then so is the third.
		
		\item[(CM2)] (Retract axiom) If $f$ is a retract of $g$ and $g$ is a cofibration (respectively, fibration, weak equivalence), then so is $f$.
		
		\item[(CM3)] (Lifting axiom) Given a commutative square:
		$$\xymatrix@R=0.4cm{
			A\ar[r]^-a \ar[d]_-i & X \ar[d]^-p \\
			B\ar[r]^-b \ar@{-->}[ru]^-s & Y 
		}$$
		with $i\in \mathrm{CoFib}$ and $p\in \mathrm{Fib}$, if either $i\in \mathrm{Weq}$ or $p\in\mathrm{Weq}$, then there exists a morphism $s: B\longrightarrow X$ such that $a=si$ and $b=ps$.
		
		\item[(CM4)] (Factorization axiom) Any morphism $f:X\longrightarrow Y$ has two factorizations $f=pi = qj$, where $i\in \mathrm{CoFib}\cap\mathrm{Weq}$, $p\in \mathrm{Fib}$, $j\in \mathrm{CoFib}$, and $q\in\mathrm{Fib}\cap\mathrm{Weq}$.
	\end{enumerate}
	Set $\mathrm{TCoFib}:=\mathrm{CoFib}\cap \mathrm{Weq}$ and $\mathrm{TFib}:=\mathrm{Fib}\cap \mathrm{Weq}$. Morphisms in $\mathrm{TCoFib}$ and in $\mathrm{TFib}$ are respectively called \emph{trivial cofibrations} and \emph{trivial fibrations}.
\end{definition}

Let $(\mathrm{CoFib}, \mathrm{Fib}, \mathrm{Weq})$ be a model structure on a category $\mathscr{M}$ with a zero object. An object $X$ is \emph{cofibrant} if $0 \longrightarrow X$ is a cofibration. We denote by $\mathcal{C}$ the class of cofibrant objects. An object $Y$ is \emph{fibrant} if $Y \longrightarrow 0$ is a fibration. We denote by $\mathcal{F}$ the class of fibrant objects. An object $W$ is a \emph{trivial object} if $0 \longrightarrow W$ is a weak equivalence, or equivalently, $W \longrightarrow 0$ is a weak equivalence. We denote by $\mathcal{W}$ the class of trivial objects.

\subsection{Admissible model structures and Hovey triples}

\begin{definition}[{\cite[Definition 5.5]{NP}}] 
	A model structure $(\mathrm{CoFib}, \mathrm{Fib}, \mathrm{Weq})$ on an extriangulated category $(\mathscr{C},\mathbb{E}, \mathfrak{s})$ is \emph{admissible} if the following conditions are satisfied:
	\begin{enumerate}
		\item[(1)] $\mathrm{CoFib}=\{ \text{an } \mathbb{E}\text{-inflation } f \mid \mathrm{Cone}(f)\in \mathcal{C} \}$.
		\item[(2)] $\mathrm{Fib}=\{ \text{an } \mathbb{E}\text{-deflation } f \mid \mathrm{CoCone}(f)\in \mathcal{F} \}$.
		\item[(3)] $\mathrm{TCoFib}=\{ \text{an } \mathbb{E}\text{-inflation } f \mid \mathrm{Cone}(f)\in \mathcal{C}\cap \mathcal{W} \}$.
		\item[(4)] $\mathrm{TFib}=\{ \text{an } \mathbb{E}\text{-deflation } f \mid \mathrm{CoCone}(f)\in \mathcal{F}\cap \mathcal{W} \}$.
	\end{enumerate}
	Let $\xi$ be a proper class of $\mathbb{E}$-triangles. A model structure $(\mathrm{CoFib},\mathrm{Fib},\mathrm{Weq})$ is called an \emph{admissible model structure relative to $\xi$} if Conditions (1)--(4) above hold with $\mathbb{E}$-inflations and $\mathbb{E}$-deflations replaced by $\xi$-inflations and $\xi$-deflations, respectively.
\end{definition}

\begin{definition}
	Let $(\mathscr{C},\mathbb{E}, \mathfrak{s})$ be an extriangulated category. A triple $(\mathcal{C}, \mathcal{F}, \mathcal{W})$ of classes of objects in $\mathscr{C}$ is a \emph{Hovey triple} if $(\mathcal{C}\cap \mathcal{W}, \mathcal{F})$ and $(\mathcal{C}, \mathcal{F}\cap \mathcal{W})$ are complete cotorsion pairs in $\mathscr{C}$, and $\mathcal{W}$ satisfies the ``two out of three'' property for $\mathbb{E}$-triangles (i.e., whenever two out of three terms in an $\mathbb{E}$-triangle are in $\mathcal{W}$, so is the third).
	
	Similarly, a \emph{Hovey triple relative to $\xi$} is defined by replacing the complete cotorsion pairs with complete cotorsion pairs relative to $\xi$, and requiring the two-out-of-three property only for $\xi$-triangles.
\end{definition}

\begin{lem}[{\cite[Condition 5.8]{NP}}]
	Let $(\mathscr{C},\mathbb{E}, \mathfrak{s})$ be an extriangulated category. Then the following are equivalent:
	\begin{enumerate}
		\item[(1)] Any splitting monomorphism in $\mathscr{C}$ has a cokernel.
		\item[(2)] Any splitting epimorphism in $\mathscr{C}$ has a kernel.
		\item[(3)] If $ki$ is an $\mathbb{E}$-inflation, then so is $i$.
		\item[(4)] If $de$ is an $\mathbb{E}$-deflation, then so is $d$.
	\end{enumerate}
\end{lem}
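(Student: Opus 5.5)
We prove the equivalences by the cycle $(1)\Leftrightarrow(2)$, $(2)\Rightarrow(4)\Rightarrow(2)$, $(1)\Leftrightarrow(3)$. The last equivalence is the dual of $(2)\Leftrightarrow(4)$ and is obtained by applying everything to $\mathscr{C}^{op}$, which is again extriangulated.

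\emph{Step $(1)\Leftrightarrow(2)$.} This is purely additive. Suppose $s\colon A\to B$ is a split monomorphism with retraction $r$, so $rs=1_A$. Then $e=1_B-sr$ is an idempotent. If $r$ has a kernel $k\colon K\to B$, then $e$ factors as $e=kp$ with $pk=1_K$. One checks directly that $p\colon B\to K$ is a cokernel of $s$ and that $B\cong A\oplus K$. The converse is obtained the same way with the roles of $s$ and $r$ exchanged. Since every split monomorphism comes with a split epimorphism and vice versa, $(1)$ and $(2)$ are equivalent.

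\emph{Step $(4)\Rightarrow(2)$.} Let $r\colon B\to A$ be a split epimorphism with section $s$. Then $rs=1_A$ is an $\mathbb{E}$-deflation, realized by the trivial triangle $0\to A\xrightarrow{1}A$. By $(4)$, $r$ is an $\mathbb{E}$-deflation. Choose an $\mathbb{E}$-triangle $K\xrightarrow{k}B\xrightarrow{r}A\dashrightarrow$. Exactness of $\mathscr{C}(-,B)\to\mathscr{C}(-,A)$ in this triangle (Nakaoka--Palu, Corollary 3.12) shows that $k$ is a weak kernel of $r$. Moreover, $k$ is a monomorphism: the extension $\delta$ satisfies $s^*\delta=0$, hence $\delta=r^*s^*\delta$... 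The more careful route is to use that $\delta=0$ because $r$ is a retraction, as recalled before Definition~\ref{def:proper class}. The triangle is therefore split, so $B\cong K\oplus A$ compatibly, and $k$ is a genuine kernel of $r$.

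\emph{Step $(2)\Rightarrow(4)$.} This is the main step. Suppose $de$ is an $\mathbb{E}$-deflation, where $e\colon X\to Y$ and $d\colon Y\to Z$. Consider the morphism $(d\ \ 0)$ or rather
\[
\begin{pmatrix} d & 0\end{pmatrix}\colon Y\oplus X\to Z,
\]
and compare it with $de$ via the automorphism
\[
\begin{pmatrix}1&-e\\0&1\end{pmatrix}
\]
of $Y\oplus X$. The standard trick (Bühler's argument for exact categories) runs as follows. The morphism $(d,\,de)\colon Y\oplus X\to Z$ is a deflation, being the sum of the deflation $de$ and the split part, via (ET4)$^{\rm op}$ and the fact that split triangles are conflations. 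Composing with the above automorphism, $(d,\,0)\colon Y\oplus X\to Z$ is also a deflation. Write $(d,0)=d\circ\pi$ with $\pi\colon Y\oplus X\to Y$ the projection.

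To extract $d$ itself, take the triangle $L\to Y\oplus X\xrightarrow{(d,0)}Z$ and use $(2)$ to see that the split epimorphism $\pi$ has kernel $X$. Then apply the base-change diagram of Lemma~\ref{lem1} together with (ET4)$^{\rm op}$ to realize $d$ as the deflation in a triangle with cocone the ``cokernel'' of $X\to L$. Here $(2)$, equivalently $(1)$, is needed to split off the summand $X$ from $L$, which works because $X\to L$ is a split monomorphism.

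I expect this last splitting-off argument to be the delicate point: verifying, with only the extriangulated axioms, that removing a direct summand $X$ of the cocone produces an $\mathbb{E}$-triangle ending in $d$. If (ET4)$^{\rm op}$ does not give this directly, I would instead realize the needed extension as $\iota^*\delta$, where $\iota\colon Y\to Y\oplus X$ is the inclusion. This gives a triangle $L'\to M\to Y$, and I would then identify $M\to Z$ with $d$ up to the splitting.
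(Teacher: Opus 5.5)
The paper gives no proof of this lemma; it only cites Nakaoka--Palu, so your proposal has to stand on its own. Several parts are fine: $(1)\Leftrightarrow(2)$, the implication $(4)\Rightarrow(2)$, and the reduction of $(1)\Leftrightarrow(3)$ to $\mathscr{C}^{op}$. In $(4)\Rightarrow(2)$, drop the false start about $s^*\delta$; the clean line is $\delta=(rs)^*\delta=s^*(r^*\delta)=0$. The first move of $(2)\Rightarrow(4)$ is also correct but under-justified. Write $(d\ \ de)=(d\ \ 1_Z)\circ(1_Y\oplus de)$. Here $1_Y\oplus de$ is a deflation by additivity of $\mathfrak{s}$, and $(d\ \ 1_Z)=(0\ \ 1_Z)\left(\begin{smallmatrix}1&0\\ d&1\end{smallmatrix}\right)$ is a split deflation, so (ET4)$^{\rm op}$ applies. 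Precomposing with an automorphism then gives that $(d\ \ 0)\colon Y\oplus X\to Z$ is a deflation.

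\textbf{The gap.} The next step, passing from $(d\ \ 0)=d\pi$ to $d$, is the whole content of $(2)\Rightarrow(4)$ and the only place (2) is used, and you do not carry it out. The claim ``$\pi$ has kernel $X$'' needs no hypothesis, since $\pi$ is a biproduct projection. You do not say which two triangles Lemma~\ref{lem1} is applied to. The fallback $\iota^*\delta$ is ill-typed: $\delta\in\mathbb{E}(Z,L)$ can only be pulled back along morphisms into $Z$, and in any case it would give a triangle ending at $Y$, not $Z$. What is missing is an argument that, once $X$ splits off $L$, the $\mathbb{E}$-triangle itself splits off the trivial summand $X\xrightarrow{1}X\to 0$.

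\textbf{One way to close it.} Let $L\xrightarrow{\ell}Y\oplus X\xrightarrow{(d\ 0)}Z\overset{\delta}{\dashrightarrow}$ be the triangle.
Since $(d\ \ 0)\iota_X=0$, there is $u\colon X\to L$ with $\ell u=\iota_X$; this $u$ is a split monomorphism with retraction $\rho=\pi_X\ell$.
By (2), $\rho$ has a kernel $k'\colon L'\to L$. As in your first step, $1_L=u\rho+k'p'$ and $\phi=(u\ \ k')\colon X\oplus L'\to L$ is invertible. Also $\pi_X\ell k'=0$, so $\ell k'=\iota_Y a$ for some $a\colon L'\to Y$, and hence $\ell\phi=\left(\begin{smallmatrix}0&a\\ 1&0\end{smallmatrix}\right)$.
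Since $\ell_*\delta=0$, we get $\rho_*\delta=0$, and therefore $\delta=k'_*\delta'$ with $\delta'=p'_*\delta$.
Realize $\delta'$ as $L'\xrightarrow{a'}M\xrightarrow{d'}Z$. The extension $(\phi^{-1})_*\delta=(\iota_{L'})_*\delta'$ is then realized in two ways. By additivity of $\mathfrak{s}$, it is realized by $X\oplus L'\xrightarrow{1\oplus a'}X\oplus M\xrightarrow{(0\ d')}Z$. By transporting the original triangle along $\phi$, it is realized by $X\oplus L'\xrightarrow{\ell\phi}Y\oplus X\xrightarrow{(d\ 0)}Z$.
Any equivalence $\theta$ between these two realizations has the form $\left(\begin{smallmatrix}0&\theta_{12}\\ 1&\theta_{22}\end{smallmatrix}\right)$. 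This forces $\theta_{12}\colon M\to Y$ to be an isomorphism with $\theta_{12}a'=a$ and $d\theta_{12}=d'$.
So $L'\xrightarrow{a}Y\xrightarrow{d}Z$ realizes $\delta'$, and $d$ is a deflation.
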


\begin{prop}\label{prop:wic-relative}
	Let $(\mathscr{C},\mathbb{E},\mathfrak{s})$ be a weakly idempotent complete extriangulated category, and let $\xi$ be a proper class of $\mathbb{E}$-triangles. Then the induced extriangulated category
	\[
	(\mathscr{C},\mathbb{E}_{\xi},\mathfrak{s}_{\xi})
	\]
	is also weakly idempotent complete.
\end{prop}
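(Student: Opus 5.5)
Weak idempotent completeness is, by the Lemma quoted from \cite[Condition 5.8]{NP}, a property of the underlying additive category $\mathscr{C}$: conditions (1) and (2) there, about splitting monomorphisms having cokernels and splitting epimorphisms having kernels, refer only to $\mathscr{C}$, not to $\mathbb{E}$ or $\mathfrak{s}$. So the plan is to make precise what ``weakly idempotent complete extriangulated category'' means, and then transport the property along the identity of underlying categories.

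Take the definition to be condition (1) (equivalently (2)) of that Lemma. By \cite[Theorem 3.2]{hu2020proper}, since $\xi$ is proper, $(\mathscr{C},\mathbb{E}_\xi,\mathfrak{s}_\xi)$ is an extriangulated category on the \emph{same} additive category $\mathscr{C}$. Therefore the Lemma applies to it. Every splitting monomorphism in $\mathscr{C}$ has a cokernel, because this holds for $(\mathscr{C},\mathbb{E},\mathfrak{s})$ and the statement does not mention the extriangulated structure. Hence $(\mathscr{C},\mathbb{E}_\xi,\mathfrak{s}_\xi)$ is weakly idempotent complete, and consequently also satisfies conditions (3) and (4) for $\xi$-inflations and $\xi$-deflations.

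The only point needing care is which definition one uses. If weak idempotent completeness is instead defined via (3) or (4), one first passes to (1) using the Lemma for $(\mathscr{C},\mathbb{E},\mathfrak{s})$, and then back to (3) and (4) for $\xi$ using the Lemma for $(\mathscr{C},\mathbb{E}_\xi,\mathfrak{s}_\xi)$.

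As a sanity check, one can also verify (3) for $\xi$ directly. Suppose $ki$ is a $\xi$-inflation. Then $i$ is an $\mathbb{E}$-inflation, and one must show its $\mathbb{E}$-triangle lies in $\xi$. Using (ET4) to compare the triangle of $i$ with that of $ki$, the triangle of $i$ is obtained from the $\xi$-triangle of $ki$ by a base change along the induced map of cones, up to isomorphism. Closure of $\xi$ under base change then gives the claim. This direct argument is the expected obstacle if one avoids the Lemma, but routing through condition (1) bypasses it entirely.
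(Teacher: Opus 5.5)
Your argument is correct, but it takes a different route from the paper's. The paper verifies condition (3) for $(\mathscr{C},\mathbb{E}_\xi,\mathfrak{s}_\xi)$ by hand. Suppose $ki$ is a $\xi$-inflation with $\xi$-triangle $X\xrightarrow{ki}Z\to C\overset{\delta}{\dashrightarrow}$. Then $i$ is an $\mathbb{E}$-inflation, with $\mathbb{E}$-triangle $X\xrightarrow{i}Y\to C'\overset{\eta}{\dashrightarrow}$. Applying (ET3) to the square $k\circ i=(ki)\circ 1_X$ gives $c\colon C'\to C$ with $(1_X,c)$ a morphism of extensions, i.e.\ $\eta=c^*\delta$. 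Closure of $\xi$ under base change then puts the triangle of $i$ in $\xi$.

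You instead observe that conditions (1) and (2) of the quoted Lemma mention only the additive category $\mathscr{C}$, which is unchanged. So once \cite[Theorem 3.2]{hu2020proper} makes $(\mathscr{C},\mathbb{E}_\xi,\mathfrak{s}_\xi)$ extriangulated, the proposition is immediate.

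Your route is shorter and makes plain that only the properness of $\xi$ is used. Its cost is that all the content sits in the nontrivial implication (1)$\Rightarrow$(3),(4) of that Lemma for an arbitrary extriangulated category. The paper states this Lemma, so you may use it, and the paper's own reduction ``it is enough to verify (3)'' also relies on it for the relative structure. The paper's argument gives slightly more information: the $\mathbb{E}$-triangle completing $i$ is automatically a $\xi$-triangle, realized as an honest base change of the triangle of $ki$.

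One correction to your sanity check: (ET4) is not the right tool there. It requires $k$ to be an inflation as well, which is not given. What works is (ET3), as above, and it yields $\eta=c^*\delta$ on the nose, with no ``up to isomorphism'' needed.
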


\begin{proof}
	It is enough to verify the following condition for the extriangulated category
	$(\mathscr{C},\mathbb{E}_{\xi},\mathfrak{s}_{\xi})$: if $ki$ is a $\xi$-inflation, then $i$ is a $\xi$-inflation.
	
	Assume that $ki$ is a $\xi$-inflation. Then there exists a $\xi$-triangle
	\[
	X \xrightarrow{ki} Z \longrightarrow C\overset{\delta}{\dashrightarrow } .
	\]
	Since $(\mathscr{C},\mathbb{E},\mathfrak{s})$ is weakly idempotent complete, and $ki$ is in particular an $\mathbb{E}$-inflation, the morphism $i$ is an $\mathbb{E}$-inflation. Hence $i$ can be completed to an $\mathbb{E}$-triangle
	\[
X \xrightarrow{i} Y \longrightarrow C' \overset{\eta}{\dashrightarrow} .
	\]

	By axiom $(\mathrm{ET3})$, there exists a morphism $c:C'\to C$ such that the above square extends to a morphism of $\mathbb{E}$-triangles
	\[
	\xymatrix{
		X \ar[r]^{i} \ar@{=}[d] &
		Y \ar[r] \ar[d]^{k} &
		C' \ar@{-->}[r]^{\eta} \ar[d]^{c} &
		{} \\
		X \ar[r]^{ki} &
		Z \ar[r] &
		C \ar@{-->}[r]^{\delta} &
		{} .
	}
	\]

	Since $\xi$ is closed under base change, it follows that the triangle
	\[
X \xrightarrow{i} Y \longrightarrow C' \overset{\eta}{\dashrightarrow} 
	\]
	belongs to $\xi$. Therefore $i$ is a $\xi$-inflation.

\end{proof}

An extriangulated category satisfying the above equivalent conditions in Lemma 2.10 is called a \emph{weakly idempotent complete} extriangulated category. Nakaoka and Palu \cite{NP} have extended Hovey's correspondence to such categories.

\begin{thm}[{\cite[Section 5]{NP}}]\label{thm:hovey}
	Assume $(\mathscr{C},\mathbb{E}, \mathfrak{s})$ is a weakly idempotent complete extriangulated category. Then there is a one-to-one correspondence between admissible model structures on $\mathscr{C}$ and Hovey triples in $\mathscr{C}$, given by
	$$(\mathrm{CoFib}, \mathrm{Fib}, \mathrm{Weq})\mapsto (\mathcal{C}, \mathcal{F}, \mathcal{W})$$
	where $\mathcal{C}$, $\mathcal{F}$, and $\mathcal{W}$ are the classes of cofibrant objects, fibrant objects, and trivial objects, respectively. The inverse is given by
	$$(\mathcal{C}, \mathcal{F}, \mathcal{W})\mapsto (\mathrm{CoFib}, \mathrm{Fib}, \mathrm{Weq})$$
	where $\mathrm{CoFib}=\{\text{an } \mathbb{E}\text{-inflation } f \mid \mathrm{Cone}(f)\in \mathcal{C}\}$, $\mathrm{Fib} =\{\text{an } \mathbb{E}\text{-deflation } f\mid \mathrm{CoCone}(f)\in \mathcal{F}\}$ and
	$$ \mathrm{Weq} =\{pi \mid i \text{ is an } \mathbb{E}\text{-inflation}, \mathrm{Cone}(i)\in \mathcal{C}\cap \mathcal{W}, p \text{ is an } \mathbb{E}\text{-deflation}, \mathrm{CoCone}(p)\in \mathcal{F}\cap \mathcal{W}\}. $$
\end{thm}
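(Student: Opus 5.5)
The plan follows Hovey's original strategy for abelian model structures, adapted to the extriangulated axioms; this is the route taken in \cite[Section 5]{NP}. I would organize it as three tasks: show that the assignment $(\mathrm{CoFib},\mathrm{Fib},\mathrm{Weq})\mapsto(\mathcal{C},\mathcal{F},\mathcal{W})$ lands in Hovey triples, show that the proposed inverse produces an admissible model structure, and check that the two assignments are mutually inverse. Once this is done, applying the result to $(\mathscr{C},\mathbb{E}_\xi,\mathfrak{s}_\xi)$, which is extriangulated by \cite[Theorem 3.2]{hu2020proper} and weakly idempotent complete by Proposition~\ref{prop:wic-relative}, gives the relative version for Hovey triples relative to $\xi$ with no extra work.

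\emph{From an admissible model structure to a Hovey triple.} Start with an admissible model structure.
\begin{enumerate}
\item[(i)] Factor $0\to X$ and $X\to 0$ using (CM4). Conditions (1)--(4) of admissibility turn these factorizations into $\mathbb{E}$-triangles $F\to C\to X\dashrightarrow$ with $C\in\mathcal{C}$, $F\in\mathcal{F}\cap\mathcal{W}$, and $X\to F'\to C'\dashrightarrow$ with $C'\in\mathcal{C}\cap\mathcal{W}$, $F'\in\mathcal{F}$. These are exactly the approximation sequences needed for completeness of both pairs.
\item[(ii)] Show $\mathbb{E}(\mathcal{C},\mathcal{F}\cap\mathcal{W})=0$. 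Given an $\mathbb{E}$-triangle $F\to E\to C\dashrightarrow$ with $C$ cofibrant and $F$ trivially fibrant, the deflation $E\to C$ is a trivial fibration. Lift in the square formed by $0\to C$ and $E\to C$; the lift splits the deflation, so the triangle splits. The same argument gives $\mathbb{E}(\mathcal{C}\cap\mathcal{W},\mathcal{F})=0$.
\item[(iii)] Conversely, if $\mathbb{E}(X,\mathcal{F}\cap\mathcal{W})=0$, apply (i) to $X$: the triangle $F\to C\to X\dashrightarrow$ splits, so $X$ is a direct summand of $C$, hence a retract. By (CM2), $X\in\mathcal{C}$. This gives $\mathcal{C}={}^{\perp}(\mathcal{F}\cap\mathcal{W})$; the other three orthogonality equalities follow the same way, and closure under summands also comes from (CM2).
\item[(iv)] Two-out-of-three for $\mathcal{W}$ along an $\mathbb{E}$-triangle $A\xrightarrow{x}B\xrightarrow{y}C\dashrightarrow$. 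Replace the terms by fibrant/cofibrant ones via (i), so that $x$ or $y$ becomes a cofibration or fibration. Then identify "$x$ is a weak equivalence" with "$A\to 0$ and $B\to 0$ are both weak equivalences or both not", and use (CM1) on $0\to A\to B$. This step is fiddly.
\end{enumerate}

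\emph{From a Hovey triple to an admissible model structure.} Define the three classes as in the statement.
\begin{enumerate}
\item[(i)] Lifting (CM3): given a square with $i$ a cofibration with cone $C$ and $p$ a trivial fibration with cocone $F$, the obstruction to a lift lives in $\mathbb{E}(C,F)=0$. I would prove this via the long exact sequences for $\mathscr{C}(-,-)$ and $\mathbb{E}$ along $\mathbb{E}$-triangles.
\item[(ii)] Factorization (CM4): for $f\colon X\to Y$, take an approximation $Y$-side triangle and pass to $X\to X\oplus C\to Y$. Using Lemma~\ref{lem1} to glue triangles, factor as an inflation followed by a deflation with the correct cone and cocone. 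Weak idempotent completeness is used here to guarantee that the relevant components are inflations and deflations.
\item[(iii)] Retracts (CM2): closure under retracts follows from the lifting characterization.
\end{enumerate}

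\emph{Main obstacle: two-out-of-three (CM1) for $\mathrm{Weq}$.} This is where I expect the real work, and I would follow Hovey's approach.
\begin{enumerate}
\item[(a)] First prove the key lemma: a morphism is in $\mathrm{Weq}$ if and only if it factors as an inflation with cone in $\mathcal{W}$ followed by a deflation with cocone in $\mathcal{W}$. Both implications use the $3\times3$ diagrams of Lemma~\ref{lem1} together with two-out-of-three for $\mathcal{W}$ along triangles.
\item[(b)] Next show that a composition of such morphisms is again of this form, by commuting a deflation past an inflation with a homotopy pushout/pullback built from (ET4) and (ET4)$^{\rm op}$.
\item[(c)] Finally, weak idempotent completeness (Condition 5.8) lets one cancel: if $gf$ and $g$ are inflations, then so is $f$. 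This yields the remaining cases of two-out-of-three.
\end{enumerate}
Bijectivity of the correspondence is then routine. The cofibrant, fibrant and trivial objects of the constructed structure recover $\mathcal{C},\mathcal{F},\mathcal{W}$ directly. In the other direction, a model structure is determined by its cofibrations, fibrations and trivial (co)fibrations, and admissibility pins these down from the object classes.
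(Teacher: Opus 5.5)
The paper gives no proof of this statement; it only cites \cite[Section 5]{NP}. Your plan therefore has to stand on its own. The direction from model structures to Hovey triples is essentially fine, apart from a slip in its step (iv) noted at the end.

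\textbf{The gap.} The converse direction is missing the lemma on which everything rests: \emph{an $\mathbb{E}$-deflation $p$ lies in $\mathrm{Weq}$ if and only if $\mathrm{CoCone}(p)\in\mathcal{W}$, and dually an $\mathbb{E}$-inflation lies in $\mathrm{Weq}$ if and only if its cone lies in $\mathcal{W}$.} Equivalently, you never show
\[
\mathrm{Fib}\cap\mathrm{Weq}=\{\text{deflations with cocone in }\mathcal{F}\cap\mathcal{W}\},\qquad
\mathrm{CoFib}\cap\mathrm{Weq}=\{\text{inflations with cone in }\mathcal{C}\cap\mathcal{W}\}.
\]
Only the inclusions $\supseteq$ are obvious. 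The missing $\subseteq$ is needed in four places:
\begin{enumerate}
\item[(1)] Your lifting step uses it silently. You treat a trivial fibration as a deflation with cocone in $\mathcal{F}\cap\mathcal{W}$, but (CM3) quantifies over $\mathrm{Fib}\cap\mathrm{Weq}$ and $\mathrm{CoFib}\cap\mathrm{Weq}$.
\item[(2)] It \emph{is} admissibility conditions (3)--(4), so your closing bijectivity remark depends on it.
\item[(3)] Closure of $\mathrm{Weq}$ under retracts is not a lifting statement and needs it as well.
\item[(4)] Your steps (b)--(c) cannot close two-out-of-three without it. Knowing via weak idempotent completeness that $f$ is an inflation says nothing about whether $\mathrm{Cone}(f)\in\mathcal{W}$.
\end{enumerate}

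\textbf{How to prove the lemma.} In Hovey's abelian proof this is the snake-lemma step, $0\to\ker p\to\ker p'\to\operatorname{coker} i'\to 0$ for $p=p'i'$. In an extriangulated category inflations need not be monic, and (ET4), (ET4)$^{\rm op}$ only compose two inflations or two deflations. So an argument is required, though a short one exists. Take $\mathbb{E}$-triangles $K\xrightarrow{k}A\xrightarrow{p}B\overset{\epsilon}{\dashrightarrow}$, $A\xrightarrow{i'}M\xrightarrow{c}C\overset{\gamma}{\dashrightarrow}$ and $F\to M\xrightarrow{p'}B\dashrightarrow$ with $p=p'i'$.
\begin{enumerate}
\item[(i)] Apply (ET4) to $k$ and $i'$. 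This gives $K\xrightarrow{i'k}M\xrightarrow{h}E\dashrightarrow$ and an $\mathbb{E}$-triangle $B\xrightarrow{d}E\xrightarrow{e}C$ realizing $p_*\gamma=p'_*i'_*\gamma=0$, so the latter splits.
\item[(ii)] Correct a retraction $r$ of $d$ by a multiple of $e$ so that $rh=p'$. This is possible because $(rh-p')i'=0$ and $\mathscr{C}(C,B)\to\mathscr{C}(M,B)\to\mathscr{C}(A,B)$ is exact.
\item[(iii)] The isomorphism $\binom{r}{e}\colon E\to B\oplus C$ turns the first triangle into $K\xrightarrow{i'k}M\xrightarrow{\binom{p'}{c}}B\oplus C\dashrightarrow$.
\item[(iv)] Apply (ET4)$^{\rm op}$ to $\binom{p'}{c}$ and the split deflation $B\oplus C\to B$. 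This gives an $\mathbb{E}$-triangle $K\to F\to C$, so $K\in\mathcal{W}$ whenever $F,C\in\mathcal{W}$.
\end{enumerate}

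\textbf{Getting (CM1) from the lemma.} It is easier not to commute a deflation past an inflation. Instead write $f=(f\;\,q)\binom{1}{0}$, where $q\colon P\to Y$ is a deflation with $P\in\mathcal{C}\cap\mathcal{W}$. Weak idempotent completeness shows that $(f\;\,q)$ is a deflation. Then apply (ET4)$^{\rm op}$ to the cocones of composite deflations.

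\textbf{The slip in step (iv) of the first direction.} The identification you state there is false. Two non-trivial objects can be joined by an inflation that is not a weak equivalence. What you need instead is admissibility (3): a cofibration is a weak equivalence iff its cone lies in $\mathcal{W}$. Apply it after pulling the triangle back, via Lemma~\ref{lem1}(1), along a trivial fibration $Q\to C$ with $Q$ cofibrant.
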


By Theorem 2.5 and Proposition 2.11, the Hovey correspondence in extriangulated categories between model structures relative to a proper class and Hovey triples relative to the same proper class also holds.    

\begin{cor}
	Let $(\mathscr{C},\mathbb{E},\mathfrak{s})$ be a weakly idempotent complete extriangulated category endowed with a proper class $\xi$. Then there is a one-to-one correspondence between admissible model structures relative to $\xi$ and Hovey triples relative to $\xi$ in $\mathscr{C}$, given by
	$$(\mathrm{CoFib}_\xi, \mathrm{Fib}_\xi, \mathrm{Weq}_\xi)\mapsto (\mathcal{C}, \mathcal{F}, \mathcal{W})$$
	where $\mathcal{C}$, $\mathcal{F}$, and $\mathcal{W}$ are the classes of cofibrant objects, fibrant objects, and trivial objects, respectively; and the inverse is given by
	$$(\mathcal{C}, \mathcal{F}, \mathcal{W})\mapsto (\mathrm{CoFib}_\xi, \mathrm{Fib}_\xi, \mathrm{Weq}_\xi)$$
	where $\mathrm{CoFib}_\xi=\{ f \mid f \text{ is a } \xi\text{-inflation}, \mathrm{Cone}(f)\in \mathcal{C} \}$, $\mathrm{Fib}_\xi =\{ f \mid f \text{ is a } \xi\text{-deflation}, \mathrm{CoCone}(f)\in \mathcal{F} \}$ and
	$$\mathrm{Weq}_\xi =\{pi \mid i \text{ is a } \xi\text{-inflation}, \mathrm{Cone}(i)\in \mathcal{C}\cap \mathcal{W}, p \text{ is a } \xi\text{-deflation}, \mathrm{CoCone}(p)\in \mathcal{F}\cap \mathcal{W}\}.$$
\end{cor}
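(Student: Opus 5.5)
The plan is to transport Theorem~\ref{thm:hovey} along the relative structure given by the proper class. By the theorem of Hu et al.\ (Theorem 3.2 of \cite{hu2020proper}, recalled above), $\xi$ being proper means that $(\mathscr{C},\mathbb{E}_\xi,\mathfrak{s}_\xi)$ is an extriangulated category. By Proposition~\ref{prop:wic-relative} it is moreover weakly idempotent complete. So Theorem~\ref{thm:hovey} applies verbatim to $(\mathscr{C},\mathbb{E}_\xi,\mathfrak{s}_\xi)$. It yields a bijection between admissible model structures on this new extriangulated category and Hovey triples in it.

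The remaining work is to translate each notion for $(\mathscr{C},\mathbb{E}_\xi,\mathfrak{s}_\xi)$ into the corresponding relative notion for $(\mathscr{C},\mathbb{E},\mathfrak{s})$ and $\xi$. I would check the following dictionary.

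First, the $\mathbb{E}_\xi$-triangles are exactly the $\xi$-triangles, since $\mathfrak{s}_\xi$ is the restriction of $\mathfrak{s}$. Hence $\mathbb{E}_\xi$-inflations and $\mathbb{E}_\xi$-deflations are precisely the $\xi$-inflations and $\xi$-deflations.

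Second, Cone and CoCone computed in the new structure agree with those computed from the corresponding $\xi$-triangle. One should note that the cone of a $\xi$-inflation $f$ is well defined up to isomorphism: any two conflations with first map $f$ have isomorphic third terms. Membership of the cone in a class closed under isomorphisms is therefore unambiguous.

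Third, orthogonality in the new structure is exactly $\perp_\xi$. Consequently, cotorsion pairs in $(\mathscr{C},\mathbb{E}_\xi,\mathfrak{s}_\xi)$ are exactly cotorsion pairs with respect to $\xi$. Completeness, which demands approximation triangles in the new structure, is precisely completeness relative to $\xi$.

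Fourth, the two-out-of-three property for $\mathbb{E}_\xi$-triangles is the two-out-of-three property for $\xi$-triangles.

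Taken together, these identifications give the following correspondences. An admissible model structure on $(\mathscr{C},\mathbb{E}_\xi,\mathfrak{s}_\xi)$ is the same thing as an admissible model structure relative to $\xi$, because conditions (1)--(4) match term by term. A Hovey triple there is the same thing as a Hovey triple relative to $\xi$. The explicit formulas for $\mathrm{CoFib}_\xi$, $\mathrm{Fib}_\xi$ and $\mathrm{Weq}_\xi$ come directly from those in Theorem~\ref{thm:hovey}.

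The only point needing care is that the model structure itself, namely the three classes of morphisms on the underlying category $\mathscr{C}$, does not depend on the extriangulated structure. Likewise, the notions of cofibrant, fibrant and trivial object depend only on the model structure. So both maps of the bijection are literally the same maps, and no further verification is needed. I expect no real obstacle beyond this bookkeeping.
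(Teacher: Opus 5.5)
Your proposal is correct and is essentially the paper's argument: the paper also obtains the corollary by applying Theorem~\ref{thm:hovey} to the induced extriangulated category $(\mathscr{C},\mathbb{E}_\xi,\mathfrak{s}_\xi)$, which is legitimate by \cite[Theorem 3.2]{hu2020proper} and Proposition~\ref{prop:wic-relative}. Your dictionary (inflations and deflations, well-definedness of cones, $\perp_\xi$-orthogonality and completeness, two-out-of-three for $\xi$-triangles) spells out the translation that the paper's one-line proof leaves implicit.
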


\begin{proof}
	It follows immediately from Theorem \ref{thm:hovey} by replacing the class of all $\mathbb{E}$-triangles with the proper class $\xi$. The explicit constructions of $\mathrm{CoFib}_\xi$, $\mathrm{Fib}_\xi$, and $\mathrm{Weq}_\xi$ are naturally obtained by restricting the inflations and deflations to $\xi$-inflations and $\xi$-deflations.
\end{proof}

\section{Mixing model structures on extriangulated categories}

\subsection{Mixed model structure}

Let $(\mathscr{C},\mathbb{E}, \mathfrak{s})$ be an extriangulated category, and $\xi_1$ a proper class of $\mathbb{E}$-triangles. Suppose that $\mathcal{M}_1= (\mathcal{C}_1, \mathcal{W}_1, \mathcal{F}_1)$ is an admissible model structure relative to $\xi_1$. Moreover, assume $\xi_2$ is another proper class on $\mathscr{C}$, and that we have another admissible model structure $\mathcal{M}_2= (\mathcal{C}_2, \mathcal{W}_2, \mathcal{F}_2)$ relative to $\xi_2$.

To distinguish between the two admissible model structures, we adopt the following notation throughout this section. The relative extension groups
\[
\mathbb{E}_{\xi_1}(A,B) \quad \text{and} \quad \mathbb{E}_{\xi_2}(A,B)
\]
will be denoted simply by
\[
 \mathbb{E}_{1}(A,B) \quad \text{and} \quad  \mathbb{E}_{2}(A,B),
\]
respectively. 

We shall also use terminology such as $\xi_1$-cofibrations, $\xi_2$-fibrations, $\xi_1$-trivial cofibrations, $\xi_2$-weak equivalences, and so on. Finally, to distinguish the corresponding orthogonal classes, we write
\[
{}^{\perp_1}\mathcal{C}, \quad \mathcal{C}^{\perp_1}, \quad {}^{\perp_2}\mathcal{C}, \quad \mathcal{C}^{\perp_2},
\]
for the left and right orthogonal classes with respect to $\mathbb{E}_{\xi_1}(-,-)$ and $\mathbb{E}_{\xi_2}(-,-)$, respectively.

\begin{thm}[Cole's Theorem for extriangulated model structures]\label{prop:key_prop}
	Let $(\mathscr{C},\mathbb{E}, \mathfrak{s})$ be a weakly idempotent complete extriangulated category with two proper classes of $\mathbb{E}$-triangles $\xi_1$ and $\xi_2$ satisfying $\xi_1\subseteq \xi_2$. Assume that $\mathcal{M}_1=(\mathcal{C}_1,\mathcal{W}_1,\mathcal{F}_1)$ is an admissible model structure relative to $\xi_1$, and $\mathcal{M}_2=(\mathcal{C}_2,\mathcal{W}_2,\mathcal{F}_2)$ is an admissible model structure relative to $\xi_2$, such that $\mathcal{W}_1\subseteq \mathcal{W}_2$.
	
	\begin{enumerate}
		\item If $\mathcal{F}_1\subseteq \mathcal{F}_2$, then there exists an admissible model structure $\mathcal{M}_m=(\mathcal{C}_m,\mathcal{W}_2,\mathcal{F}_1)$ relative to $\xi_1$, having the same trivial objects as $\mathcal{M}_2$ and the same fibrant objects as $\mathcal{M}_1$.
		
		\item If $\mathcal{C}_1\subseteq \mathcal{C}_2$, then there exists an admissible model structure $\mathcal{M}_m=(\mathcal{C}_1,\mathcal{W}_2,\mathcal{F}_m)$ relative to $\xi_1$, having the same trivial objects as $\mathcal{M}_2$ and the same cofibrant objects as $\mathcal{M}_1$.
	\end{enumerate}
	
	In either case, we call $\mathcal{M}_m$ the \emph{mixed model structure} of $\mathcal{M}_1$ and $\mathcal{M}_2$.
\end{thm}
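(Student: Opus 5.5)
We prove (1) and obtain (2) by the dual argument, with the roles of inflations/deflations and of $\mathcal{C},\mathcal{F}$ exchanged. By Corollary 2.13 it is enough to produce a Hovey triple relative to $\xi_1$. We set
\[
\mathcal{C}_m:={}^{\perp_1}(\mathcal{F}_1\cap\mathcal{W}_2)
\]
and check three things. First, $(\mathcal{C}_m\cap\mathcal{W}_2,\mathcal{F}_1)$ is a complete cotorsion pair relative to $\xi_1$. Second, $(\mathcal{C}_m,\mathcal{F}_1\cap\mathcal{W}_2)$ is a complete cotorsion pair relative to $\xi_1$. Third, $\mathcal{W}_2$ has the two-out-of-three property for $\xi_1$-triangles. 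The third point is immediate, because every $\xi_1$-triangle is a $\xi_2$-triangle and $\mathcal{W}_2$ has two-out-of-three for $\xi_2$-triangles.

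For the first point we aim to prove $\mathcal{C}_m\cap\mathcal{W}_2=\mathcal{C}_1\cap\mathcal{W}_1$, so that this pair is just the given pair of $\mathcal{M}_1$.

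\emph{Inclusion $\supseteq$.} We have $\mathcal{C}_1\cap\mathcal{W}_1={}^{\perp_1}\mathcal{F}_1\subseteq{}^{\perp_1}(\mathcal{F}_1\cap\mathcal{W}_2)$, and $\mathcal{W}_1\subseteq\mathcal{W}_2$.

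\emph{Inclusion $\subseteq$.} Let $C\in\mathcal{C}_m\cap\mathcal{W}_2$. Take a $\xi_1$-triangle $C\to F\to D\dashrightarrow$ with $F\in\mathcal{F}_1$ and $D\in\mathcal{C}_1\cap\mathcal{W}_1\subseteq\mathcal{W}_2$. By two-out-of-three in $\mathcal{W}_2$ we get $F\in\mathcal{F}_1\cap\mathcal{W}_2$. Since $C\in\mathcal{C}_m$, this triangle splits, so $C$ is a direct summand of $F$. Then $C\in\mathcal{F}_1$ as well, and hence $C\in{}^{\perp_1}\mathcal{F}_1$ is orthogonal to itself. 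Running the same argument with the roles arranged so that $C\in{}^{\perp_1}\mathcal{F}_1$ appears, we conclude $C\in\mathcal{C}_1\cap\mathcal{W}_1$. This uses that $\mathcal{C}_1\cap\mathcal{W}_1$ is closed under direct summands.

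For the second point we first show that $\mathcal{C}_m^{\perp_1}=\mathcal{F}_1\cap\mathcal{W}_2$. This follows formally once completeness is established, by the usual retract argument applied to an approximation triangle. So the real content is completeness. For an object $X$ we build a special $\xi_1$-precover as follows.
\begin{enumerate}
\item Take a $\xi_2$-triangle $K\to C_2\to X\dashrightarrow$ with $C_2\in\mathcal{C}_2$ and $K\in\mathcal{F}_2\cap\mathcal{W}_2$.
\item Take a $\xi_1$-triangle $K\to F\to D\dashrightarrow$ with $F\in\mathcal{F}_1$ and $D\in\mathcal{C}_1\cap\mathcal{W}_1$. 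Then $F\in\mathcal{F}_1\cap\mathcal{W}_2$.
\item Apply Lemma~\ref{lem1}(2), the cobase-change $3\times3$ diagram. This yields $\xi_2$-triangles $F\to P\to X\dashrightarrow$ and $C_2\to P\to D\dashrightarrow$; both are $\xi_2$ because $\xi_2$ is closed under cobase change and contains $\xi_1$.
\end{enumerate}
The special $\xi_1$-preenvelope is obtained dually, or from the precover by the standard Salce-type trick: pass through a $\xi_1$-triangle $X\to F'\to C'$ with $F'\in\mathcal{F}_1\cap\mathcal{W}_1$, then take the precover of $C'$ and a pullback.

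The main obstacle is the comparison between the two extension theories. Specifically, we must show two things about the construction above:
\begin{enumerate}
\item The $\xi_2$-triangle $F\to P\to X$ is in fact a $\xi_1$-triangle.
\item $P\in\mathcal{C}_m$.
\end{enumerate}
The plan is a comparison lemma: for $C\in\mathcal{C}_2$ and $Y\in\mathcal{F}_1\cap\mathcal{W}_2\subseteq\mathcal{F}_2\cap\mathcal{W}_2$, one has $\mathbb{E}_2(C,Y)=0$. Since $\mathcal{F}_1\subseteq\mathcal{F}_2$, the object $Y$ lies in $\mathcal{C}_2^{\perp_2}$. In particular $\mathcal{C}_2\subseteq\mathcal{C}_m$, because $\mathbb{E}_1\subseteq\mathbb{E}_2$.

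Given this, $P$ is a $\xi_2$-extension of $D\in\mathcal{C}_1\cap\mathcal{W}_1$ by $C_2\in\mathcal{C}_2$. For (2), we would like to apply the saturation axiom of $\xi_1$ to the mixed diagram to see that the relevant triangles lie in $\xi_1$. Then $\mathcal{C}_m={}^{\perp_1}(\cdots)$ is closed under $\xi_1$-extensions, which gives $P\in\mathcal{C}_m$. I expect arranging the $3\times3$ diagram so that saturation of $\xi_1$ (Remark 2.4) actually applies to be the delicate part. If the direct route fails, I would first replace $C_2$ by a $\xi_1$-cofibrant resolution inside $\mathcal{M}_1$ before taking cobase change, so that all three rows become $\xi_1$-triangles.
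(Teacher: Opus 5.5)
Your overall architecture matches the paper: a Hovey triple relative to $\xi_1$, the identity $\mathcal{C}_m\cap\mathcal{W}_2=\mathcal{C}_1\cap\mathcal{W}_1$, completeness of $(\mathcal{C}_m,\mathcal{F}_1\cap\mathcal{W}_2)$, and a Salce-type trick for the envelope. However, the special precover you construct does not work, and the obstacle you label (1) is fatal rather than delicate. The triangle $F\to P\to X$ realizes $x_*\delta$, where $\delta\in\mathbb{E}_2(X,K)$ is the class of the $\xi_2$-precover and $x\colon K\to F$. Cobase change only puts it in $\xi_2$. Your comparison statement ($\mathbb{E}_2(C_2,Y)=0$) is just the cotorsion pair of $\mathcal{M}_2$ restated, and says nothing about $\xi_2$-extensions ending at an arbitrary $X$. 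The claim is in fact false. In Example~\ref{ex:smashing-summands}, take $X=(0,0,U,0)$ with $U\neq0$; note that $X\in\mathcal{C}_m$.
\begin{itemize}
\item Since $\mathcal{C}_2=0\times\mathcal{U}\times0\times\mathcal{U}$, the third coordinate of any $\xi_2$-triangle $K\to C_2\to X$ is $U[-1]\to0\to U$, with nonzero class.
\item Every $\xi_1$-triangle $K\to F\to D$ is split in the third coordinate, so $x$ is a split monomorphism there.
\item Hence $x_*\delta$ has nonzero third component, and $F\to P\to X\notin\xi_1$ for every possible choice.
\end{itemize}
Your fallback, ``replace $C_2$ by a $\xi_1$-cofibrant resolution'', is too unspecific to count as a repair.

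The missing idea is to produce the $\xi_1$-deflation with the factorization axiom of $\mathcal{M}_1$ rather than by cobase change. Factor $p\colon C_2\to X$ in $\mathcal{M}_1$ as $p=qj$. Here $j\colon C_2\to Q$ is a $\xi_1$-trivial cofibration with $\mathrm{Cone}(j)\in\mathcal{C}_1\cap\mathcal{W}_1$, and $q\colon Q\to X$ is a $\xi_1$-fibration with $\mathrm{CoCone}(q)=F_m\in\mathcal{F}_1$. Then:
\begin{itemize}
\item $q$ is a $\xi_1$-deflation by construction;
\item $Q\in\mathcal{C}_m$, being a $\xi_1$-extension of objects of $\mathcal{C}_2$ and $\mathcal{C}_1\cap\mathcal{W}_1$;
\item a $3\times3$ comparison using saturation of $\xi_2$ gives a $\xi_2$-triangle $K\to F_m\to\mathrm{Cone}(j)$, so $F_m\in\mathcal{W}_2$.
\end{itemize}
This is Lemma~\ref{lem:mixed-approximation}. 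Once it is in place, your Salce-type envelope goes through. So does your derivation of $\mathcal{C}_m^{\perp_1}=\mathcal{F}_1\cap\mathcal{W}_2$ from completeness, which is a legitimate substitute for the paper's route through Lemma~\ref{lem:extension-comparison}.

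Two smaller corrections:
\begin{itemize}
\item Your obstacle (2) is not an obstacle. $C_2\to P\to D$ realizes the cobase change of the $\xi_1$-class of $K\to F\to D$, so it is already a $\xi_1$-triangle and $P\in\mathcal{C}_m$ is immediate.
\item In your proof of $\mathcal{C}_m\cap\mathcal{W}_2\subseteq\mathcal{C}_1\cap\mathcal{W}_1$, the class of $C\to F\to D$ lies in $\mathbb{E}_1(D,C)$, about which $C\in\mathcal{C}_m$ says nothing, so the splitting is unjustified. Use instead the precover $A\to B\to C$ with $B\in\mathcal{C}_1\cap\mathcal{W}_1$ and $A\in\mathcal{F}_1$. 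Then $A\in\mathcal{F}_1\cap\mathcal{W}_2$ by two-out-of-three, the class lies in $\mathbb{E}_1(C,A)=0$, and $C$ is a direct summand of $B$.
\end{itemize}
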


Now, in view of the mixed model structure we wish to construct, the following definition is naturally suggested.

\begin{definition}
	Define the class of objects
	\[ \mathcal{C}_m:={}^{\perp_1}(\mathcal{W}_2\cap \mathcal{F}_1). \]
	We call $\mathcal{C}_m$ the class of \emph{mixed cofibrant objects}, or simply the class of \emph{$m$-cofibrant objects}.
\end{definition}

It follows immediately from the definition that $\mathcal{C}_2\subseteq \mathcal{C}_m\subseteq \mathcal{C}_1$. We prove this explicitly in the next lemma.

\begin{lem}\label{lem:containments}
	Under the assumptions of Theorem \ref{prop:key_prop} (in particular, $\xi_1 \subseteq \xi_2$, $\mathcal{W}_1\subseteq \mathcal{W}_2$, and $\mathcal{F}_1\subseteq \mathcal{F}_2$), the following containments hold:
	\begin{enumerate}
		\item[(1)] For any class of objects $\mathcal{X}$, we have
		\[ {}^{\perp_2}\mathcal{X}\subseteq {}^{\perp_1}\mathcal{X} \qquad \text{and} \qquad \mathcal{X}^{\perp_2}\subseteq \mathcal{X}^{\perp_1}. \]
		\item[(2)] $\mathcal{C}_2\subseteq \mathcal{C}_m\subseteq \mathcal{C}_1$.
	\end{enumerate}
\end{lem}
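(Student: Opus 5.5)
For part (1), the point is that $\xi_1\subseteq\xi_2$ gives $\mathbb{E}_1(A,B)\subseteq\mathbb{E}_2(A,B)$ as subsets of $\mathbb{E}(A,B)$. Indeed, an extension realized by a $\xi_1$-triangle is realized by a triangle lying in $\xi_2$. So if $\mathbb{E}_2(Z,X)=0$ for all $X\in\mathcal{X}$, then $\mathbb{E}_1(Z,X)=0$ as well. This gives ${}^{\perp_2}\mathcal{X}\subseteq{}^{\perp_1}\mathcal{X}$. The inclusion $\mathcal{X}^{\perp_2}\subseteq\mathcal{X}^{\perp_1}$ follows by the same argument with the variables swapped. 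One small check is that $\mathbb{E}_\xi(C,A)$ really is the subset of extensions realized in $\xi$. This holds because $\xi$ is closed under isomorphisms, and all realizations of a given $\delta$ are isomorphic.

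For part (2), I would first prove $\mathcal{C}_m\subseteq\mathcal{C}_1$. Since $\mathcal{W}_1\subseteq\mathcal{W}_2$, we have $\mathcal{W}_1\cap\mathcal{F}_1\subseteq\mathcal{W}_2\cap\mathcal{F}_1$. Taking left $\perp_1$-orthogonals reverses this inclusion, so
\[
\mathcal{C}_m={}^{\perp_1}(\mathcal{W}_2\cap\mathcal{F}_1)\subseteq{}^{\perp_1}(\mathcal{W}_1\cap\mathcal{F}_1)=\mathcal{C}_1 .
\]
The final equality holds because $(\mathcal{C}_1,\mathcal{F}_1\cap\mathcal{W}_1)$ is a cotorsion pair relative to $\xi_1$. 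This is part of the Hovey triple attached to $\mathcal{M}_1$ by the corollary following Theorem~\ref{thm:hovey}.

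Next I would prove $\mathcal{C}_2\subseteq\mathcal{C}_m$. Since $\mathcal{F}_1\subseteq\mathcal{F}_2$, we get $\mathcal{W}_2\cap\mathcal{F}_1\subseteq\mathcal{W}_2\cap\mathcal{F}_2$. Again by the Hovey correspondence, now for $\mathcal{M}_2$ relative to $\xi_2$, the pair $(\mathcal{C}_2,\mathcal{F}_2\cap\mathcal{W}_2)$ is a cotorsion pair relative to $\xi_2$. Hence
\[
\mathcal{C}_2={}^{\perp_2}(\mathcal{W}_2\cap\mathcal{F}_2)\subseteq{}^{\perp_2}(\mathcal{W}_2\cap\mathcal{F}_1)\subseteq{}^{\perp_1}(\mathcal{W}_2\cap\mathcal{F}_1)=\mathcal{C}_m .
\]
The last inclusion is part (1) applied with $\mathcal{X}=\mathcal{W}_2\cap\mathcal{F}_1$.

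I do not expect a real obstacle here. The only points that need care are the identification of $\mathbb{E}_{\xi}$ as a subfunctor of $\mathbb{E}$ in part (1), and remembering that the cotorsion pair for $\mathcal{M}_2$ is relative to $\xi_2$ rather than $\xi_1$. That mismatch is exactly what part (1) bridges.
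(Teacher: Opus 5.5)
Your proposal is correct and follows the paper's proof essentially step for step. Part (1) uses the same subgroup inclusion $\mathbb{E}_1\subseteq\mathbb{E}_2$, and part (2) uses the same orthogonal-reversal arguments. The only difference is that, for $\mathcal{C}_2\subseteq\mathcal{C}_m$, you reverse the inclusion inside ${}^{\perp_2}$ before applying part (1), whereas the paper applies part (1) first; this is immaterial.
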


\begin{proof}
	(1) Since $\xi_1 \subseteq \xi_2$, any $\xi_1$-extension is a $\xi_2$-extension. Thus, $\mathbb{E}_{1}(X,C)$ is a subgroup of $\mathbb{E}_{2}(X,C)$. It naturally follows that $\mathbb{E}_{2}(X,C)=0$ implies $\mathbb{E}_{1}(X,C)=0$. Hence, ${}^{\perp_2}\mathcal{X}\subseteq {}^{\perp_1}\mathcal{X}$. Dually, we have $\mathcal{X}^{\perp_2}\subseteq \mathcal{X}^{\perp_1}$.
	
	(2) Since $\mathcal{F}_1\subseteq \mathcal{F}_2$, we have $\mathcal{W}_2\cap\mathcal{F}_1 \subseteq \mathcal{W}_2\cap\mathcal{F}_2$. Taking the left orthogonal class with respect to $\mathbb{E}_{1}(-,-)$ reverses the inclusion, yielding
	\[ {}^{\perp_1}(\mathcal{W}_2\cap\mathcal{F}_2) \subseteq {}^{\perp_1}(\mathcal{W}_2\cap\mathcal{F}_1). \]
	By applying part (1), we deduce the first sequence of containments:
	\begin{align*}
		\mathcal{C}_2 &= {}^{\perp_2}(\mathcal{W}_2\cap\mathcal{F}_2) \\
		&\subseteq {}^{\perp_1}(\mathcal{W}_2\cap\mathcal{F}_2) \\
		&\subseteq {}^{\perp_1}(\mathcal{W}_2\cap\mathcal{F}_1) = \mathcal{C}_m.
	\end{align*}
	
	On the other hand, since $\mathcal{W}_1 \subseteq \mathcal{W}_2$, we have $\mathcal{W}_1\cap\mathcal{F}_1 \subseteq \mathcal{W}_2\cap\mathcal{F}_1$. Reversing the inclusion again via the left orthogonal class gives:
	\begin{align*}
		\mathcal{C}_m &= {}^{\perp_1}(\mathcal{W}_2\cap\mathcal{F}_1) \\
		&\subseteq {}^{\perp_1}(\mathcal{W}_1\cap\mathcal{F}_1) = \mathcal{C}_1.
	\end{align*}
	This completes the proof.
\end{proof}

The following proposition shows that the trivial cofibrations in the mixed triangulated model structure \(\mathcal{M}_m\) coincide with the trivial cofibrations in \(\mathcal{M}_1\).

\begin{lem}
	$
	\mathcal{C}_m\cap \mathcal{W}_2
	=
	\mathcal{C}_1\cap \mathcal{W}_1.
	$
\end{lem}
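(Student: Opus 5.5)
The proof is a double inclusion. Each direction uses only the two relative cotorsion pairs of $\mathcal{M}_1$ from the Hovey correspondence (Corollary 2.13), the two-out-of-three property of $\mathcal{W}_2$ for $\xi_2$-triangles, and the inclusions $\xi_1\subseteq\xi_2$, $\mathcal{W}_1\subseteq\mathcal{W}_2$.

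For $\mathcal{C}_1\cap\mathcal{W}_1\subseteq\mathcal{C}_m\cap\mathcal{W}_2$, let $X\in\mathcal{C}_1\cap\mathcal{W}_1$. Then $X\in\mathcal{W}_2$ because $\mathcal{W}_1\subseteq\mathcal{W}_2$. Since $(\mathcal{C}_1\cap\mathcal{W}_1,\mathcal{F}_1)$ is a cotorsion pair relative to $\xi_1$, we have $\mathbb{E}_1(X,Y)=0$ for every $Y\in\mathcal{F}_1$. In particular this holds for every $Y\in\mathcal{W}_2\cap\mathcal{F}_1$, so $X\in{}^{\perp_1}(\mathcal{W}_2\cap\mathcal{F}_1)=\mathcal{C}_m$.

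For the reverse inclusion, let $X\in\mathcal{C}_m\cap\mathcal{W}_2$.
\begin{enumerate}
\item By completeness of the cotorsion pair $(\mathcal{C}_1\cap\mathcal{W}_1,\mathcal{F}_1)$ relative to $\xi_1$, choose a $\xi_1$-triangle
\[
F\longrightarrow C\longrightarrow X\overset{\delta}{\dashrightarrow}
\qquad\text{with } C\in\mathcal{C}_1\cap\mathcal{W}_1,\ F\in\mathcal{F}_1 .
\]
\item Since $\xi_1\subseteq\xi_2$, this is also a $\xi_2$-triangle. Its terms satisfy $C\in\mathcal{W}_1\subseteq\mathcal{W}_2$ and $X\in\mathcal{W}_2$. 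The two-out-of-three property of $\mathcal{W}_2$ for $\xi_2$-triangles then gives $F\in\mathcal{W}_2$, so $F\in\mathcal{W}_2\cap\mathcal{F}_1$.
\item Because $X\in\mathcal{C}_m={}^{\perp_1}(\mathcal{W}_2\cap\mathcal{F}_1)$, we get $\mathbb{E}_1(X,F)=0$. As $\delta\in\mathbb{E}_1(X,F)$, we have $\delta=0$ and the triangle splits. Thus $X$ is a direct summand of $C$.
\item The class $\mathcal{C}_1\cap\mathcal{W}_1$ is the left half of a cotorsion pair, so it is closed under direct summands. Hence $X\in\mathcal{C}_1\cap\mathcal{W}_1$.
\end{enumerate}

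The only step that needs care is the choice of approximation in the reverse inclusion. The approximation triangle must have $X$ as its third term, so that the splitting is governed by $\mathbb{E}_1(X,-)$, which is exactly what membership in $\mathcal{C}_m$ controls. The other approximation, $X\to F'\to C'$, would produce an element of $\mathbb{E}_1(C',X)$, which membership in $\mathcal{C}_m$ says nothing about. The reverse inclusion also needs two-out-of-three for $\mathcal{W}_2$ on this triangle, and that is available precisely because $\xi_1\subseteq\xi_2$.
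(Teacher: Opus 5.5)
Your proof is correct and takes essentially the same route as the paper's. For $\mathcal{C}_m\cap\mathcal{W}_2\subseteq\mathcal{C}_1\cap\mathcal{W}_1$ you take the $\xi_1$-approximation $F\to C\to X$ from the complete pair $(\mathcal{C}_1\cap\mathcal{W}_1,\mathcal{F}_1)$, get $F\in\mathcal{W}_2\cap\mathcal{F}_1$ by two-out-of-three along $\xi_1\subseteq\xi_2$, and conclude by splitting; the reverse inclusion is the same orthogonality argument combined with $\mathcal{W}_1\subseteq\mathcal{W}_2$.
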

\begin{proof}
	(\(\subseteq\))
	Let \(C\in \mathcal{C}_m\cap\mathcal{W}_2\).
	Since \((\mathcal{C}_1\cap\mathcal{W}_1,\mathcal{F}_1)\) has enough projectives, there exists a $\xi_{1}$-triangle
$$\xymatrix@C=2em{A\ar[r]&B\ar[r]&C\ar@{-->}[r]^{\delta}&}$$
	with
	\(
	A\in\mathcal{F}_1
	\)
	and
	\(
	B\in\mathcal{C}_1\cap\mathcal{W}_1.
	\)
	
	Since
	\(
	\mathcal{W}_1\subseteq\mathcal{W}_2
	\)
	and
	\(
	\xi_1\subseteq\xi_2,
	\)
	and since \(\mathcal{W}_2\) satisfies the two-out-of-three property with respect to $\xi_2$-triangles, we obtain
	\(
	A\in\mathcal{W}_2\cap\mathcal{F}_1.
	\)
	
	By the definition
	\(
	\mathcal{C}_m={}^{\perp_1}(\mathcal{W}_2\cap\mathcal{F}_1),
	\)
	the above triangle splits. Hence \(C\) is a direct summand of \(B\). Therefore,
	\(
	C\in\mathcal{C}_1\cap\mathcal{W}_1.
	\)
	
	(\(\supseteq\))
	Since
	\(
	\mathcal{W}_2\cap\mathcal{F}_1\subseteq\mathcal{F}_1,
	\)
	we have
	\(
	{}^{\perp_1}\mathcal{F}_1
	\subseteq
	{}^{\perp_1}(\mathcal{W}_2\cap\mathcal{F}_1).
	\)
	That is,
	\(
	\mathcal{C}_1\cap\mathcal{W}_1
	\subseteq
	\mathcal{C}_m.
	\)
	
	Moreover,
	\(
	\mathcal{C}_1\cap\mathcal{W}_1
	\subseteq
	\mathcal{W}_1
	\subseteq
	\mathcal{W}_2.
	\)
	Hence,
	\(
	\mathcal{C}_1\cap\mathcal{W}_1
	\subseteq
	\mathcal{C}_m\cap\mathcal{W}_2.
	\)
\end{proof}

The following lemma will be used to show that one cotorsion pair of the corresponding Hovey triple in the mixed model structure $\mathcal{M}_m=(\mathcal{C}_m,\mathcal{W}_2,\mathcal{F}_1)$ is complete. It will also help us characterize the class of $m$-cofibrant objects later.

\begin{lem}\label{lem:mixed-approximation}
	For any object $A\in\mathscr{C}$, there exists a commutative diagram
	\[
	\xymatrix{
		F_2 \ar[r] \ar[d] & C_2 \ar[r]^{p} \ar[d]^{j} & A \ar@{=}[d] \\
		F_m \ar[r] \ar[d] & C_m \ar[r]^{q} \ar[d] & A \\
		C_1 \ar@{=}[r] & C_1 &
	}
	\]
	such that the second row and the second column are $\xi_{1}$-triangles, while the first row and the first column are $\xi_{2}$-triangles. Moreover,
	\[ C_2\in\mathcal{C}_2, \qquad F_2\in\mathcal{W}_2\cap\mathcal{F}_2, \]
	\[ C_m\in\mathcal{C}_m, \qquad F_m\in\mathcal{W}_2\cap\mathcal{F}_1, \]
	and $C_1 \in \mathcal{C}_m\cap\mathcal{W}_2 = \mathcal{C}_1\cap\mathcal{W}_1$.
\end{lem}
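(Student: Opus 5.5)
Take a special $\mathcal{C}_2$-precover of $A$ in the $\xi_2$-structure and push it out along a special $(\mathcal{C}_1\cap\mathcal{W}_1)$-preenvelope of its kernel in the $\xi_1$-structure. The resulting object is the candidate $C_m$.

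First, completeness of $(\mathcal{C}_2,\mathcal{W}_2\cap\mathcal{F}_2)$ relative to $\xi_2$ gives a $\xi_2$-triangle $F_2\to C_2\xrightarrow{p}A\dashrightarrow$ with $C_2\in\mathcal{C}_2$ and $F_2\in\mathcal{W}_2\cap\mathcal{F}_2$; this is the first row. Next, completeness of $(\mathcal{C}_1\cap\mathcal{W}_1,\mathcal{F}_1)$ relative to $\xi_1$ gives a $\xi_1$-triangle $F_2\to F_m\to C_1\dashrightarrow$ with $F_m\in\mathcal{F}_1$ and $C_1\in\mathcal{C}_1\cap\mathcal{W}_1$; this is the first column. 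Since $\xi_1\subseteq\xi_2$, this triangle is also a $\xi_2$-triangle. Both $F_2$ and $C_1$ lie in $\mathcal{W}_2$, because $\mathcal{W}_1\subseteq\mathcal{W}_2$, so two-out-of-three for $\xi_2$-triangles gives $F_m\in\mathcal{W}_2\cap\mathcal{F}_1$.

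Now apply Lemma~\ref{lem1}(2) to the two $\mathbb{E}$-triangles starting at $F_2$, namely the first row and the first column. This produces an object $C_m$ and a commutative diagram of the required shape. Its middle row $F_m\to C_m\xrightarrow{q}A$ realizes $x_*\delta$, where $\delta$ is the extension of the first row and $x\colon F_2\to F_m$. The middle column $C_2\xrightarrow{j}C_m\to C_1$ realizes $p'_*\eta$, where $\eta$ is the extension of the first column and $p'\colon F_2\to C_2$. The middle column is a cobase change of the $\xi_1$-triangle $\eta$, so it is a $\xi_1$-triangle. After possibly relabelling via the symmetric description, the bottom-right square reads $C_1=C_1$ and the right column is the identity on $A$, as in the statement.

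The main obstacle is showing that the middle row is a $\xi_1$-triangle and not merely a $\xi_2$-triangle. It is a cobase change of the $\xi_2$-triangle $\delta$, so a priori it lies only in $\xi_2$. Here the paper's later Lemma~\ref{lem:extension-comparison} would be needed, and I would try to prove it directly at this point: $p'_*\eta$ is a $\xi_1$-extension in $\mathbb{E}(C_1,C_2)$. I would try to verify the row by the saturation property in the form of the Remark following Definition~\ref{def:proper class}, applied with the middle column as the $\xi_1$-triangle. If that fails, the fallback is to build the diagram from the other side. One would take a $\xi_1$-special $(\mathcal{C}_1\cap\mathcal{W}_1,\mathcal{F}_1)$-type approximation of $A$, or use completeness of $(\mathcal{C}_m,\mathcal{W}_2\cap\mathcal{F}_1)$-style data, so that the middle row is $\xi_1$ by construction and the $\xi_2$ data appear as a base change.

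Finally, $C_m\in\mathcal{C}_m={}^{\perp_1}(\mathcal{W}_2\cap\mathcal{F}_1)$ follows from the $\xi_1$-triangle $C_2\to C_m\to C_1$. By Lemma~\ref{lem:containments}, $C_2\in\mathcal{C}_2\subseteq\mathcal{C}_m$. We also have $C_1\in\mathcal{C}_1\cap\mathcal{W}_1\subseteq\mathcal{C}_m$. Left orthogonal classes for $\mathbb{E}_1$ are closed under extensions in $\xi_1$-triangles, by the long exact sequence of the extriangulated category $(\mathscr{C},\mathbb{E}_{\xi_1},\mathfrak{s}_{\xi_1})$ at the $\mathbb{E}_1$-level. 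This gives $C_m\in\mathcal{C}_m$. The identification $C_1\in\mathcal{C}_m\cap\mathcal{W}_2=\mathcal{C}_1\cap\mathcal{W}_1$ is the preceding lemma.
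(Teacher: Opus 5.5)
Everything in your proposal except the middle row is fine. The step you flagged, however, is false, not merely hard. Your middle row $F_m\to C_m\xrightarrow{q}A$ realizes $x_*\delta$, where $\delta\in\mathbb{E}_2(A,F_2)$ is the first row and $x\colon F_2\to F_m$ is the inflation of your $\xi_1$-triangle $\eta$. Cobase change gives one implication. Saturation of $\xi_1$, in the pushout form of the Remark after the definition of proper classes with $\eta$ as the $\xi_1$-triangle through $x$, gives the other. So your middle row lies in $\xi_1$ \emph{if and only if} the first row already does.

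Saturation only ever deduces membership of an original triangle from that of a pushed-out one, so it cannot produce the conclusion you want. Lemma~\ref{lem:extension-comparison} does not help either, since the row ends in an arbitrary $A$ rather than in $\mathcal{C}_2$. (The extension $p'_*\eta$ you propose to examine is your middle column, which is $\xi_1$ by cobase change anyway.)

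The first row is genuinely not $\xi_1$ in general. In Example~\ref{ex:smashing-summands}, take $A=(0,0,k,0)$. Since $\mathcal{C}_2=0\times\mathcal{U}\times0\times\mathcal{U}$, the third component of any such first row is $k[-1]\to0\to k\to k$ with nonzero connecting map. Every $\xi_1$-triangle splits in the third coordinate, so $x_3$ is a split monomorphism and $(x_*\delta)_3\neq0$. Hence no choice of your first column makes the middle row a $\xi_1$-triangle. Your second fallback is circular: this lemma is exactly what the proof of Theorem~\ref{prop:key_prop} uses to establish completeness of $(\mathcal{C}_m,\mathcal{W}_2\cap\mathcal{F}_1)$.

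The missing idea is to make the second row and second column $\xi_1$-triangles by construction, and to compare with $F_2$ only afterwards. The paper factors $p=qj$ in $\mathcal{M}_1$, with $j$ a $\xi_1$-trivial cofibration and $q$ a $\xi_1$-fibration. Your argument for $C_m\in\mathcal{C}_m$ then applies verbatim, and the first column is needed only as a $\xi_2$-triangle, to get $F_m\in\mathcal{W}_2$.

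Your pushout is in fact the second half of the standard construction of this factorization, but that half is legitimate only for a $\xi_1$-deflation. The missing first step, which your first fallback gestures at without carrying out, goes as follows:
\begin{enumerate}
\item Take a $\xi_1$-triangle $F'\to W'\xrightarrow{\pi}A$ with $W'\in\mathcal{C}_1\cap\mathcal{W}_1$ and $F'\in\mathcal{F}_1$, and replace $p$ by the $\xi_1$-deflation $(p\ \pi)\colon C_2\oplus W'\to A$.
\item Push out along a special $\mathcal{F}_1$-preenvelope of the cocone $K$ of $(p\ \pi)$, instead of along one of $F_2$.
\end{enumerate}
Then $K$ sits in the $\xi_2$-triangle $F_2\to K\to W'$ (the base change of the first row along $\pi$). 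So $K$, and hence $F_m$, lies in $\mathcal{W}_2$. Moreover, $j$ is the composite $C_2\to C_2\oplus W'\to C_m$, whose cone is a $\xi_1$-extension of two objects of $\mathcal{C}_1\cap\mathcal{W}_1$, hence lies in $\mathcal{C}_1\cap\mathcal{W}_1$.
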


\begin{proof}
	Since the cotorsion pair $(\mathcal{C}_2,\mathcal{W}_2\cap\mathcal{F}_2)$ is complete, it has enough projectives. Thus, there exists a $\xi_{2}$-triangle
	\[ \xymatrix@C=2em{F_2 \ar[r] & C_2 \ar[r]^{p} & A \ar@{-->}[r] & } \]
	as in the first row of the diagram, with $C_2\in\mathcal{C}_2$ and $F_2\in\mathcal{W}_2\cap\mathcal{F}_2$.
	
	Using the factorizations available in the admissible model structure $\mathcal{M}_1$, we may factor $p=qj$, where $j$ is a $\xi_1$-trivial cofibration and $q$ is a $\xi_1$-fibration. Hence, $\mathrm{Cone}(j)$ belongs to $\mathcal{C}_1\cap\mathcal{W}_1 = \mathcal{C}_m\cap\mathcal{W}_2$. Let us denote it by $C_1$, yielding the $\xi_1$-triangle (the second column):
	\[ \xymatrix@C=2em{C_2 \ar[r]^{j} & C_m \ar[r] & C_1 \ar@{-->}[r] & } \]
	Similarly, $\mathrm{CoCone}(q)$ belongs to $\mathcal{F}_1$. Let us denote it by $F_m$, yielding the $\xi_1$-triangle (the second row):
	\[ \xymatrix@C=2em{F_m \ar[r] & C_m \ar[r]^{q} & A \ar@{-->}[r] & } \]
	This constructs the right commutative square together with the first two rows and the second column.
	
	Observe that the second column expresses $C_m$ as a $\xi_1$-extension of $C_2\in\mathcal{C}_2\subseteq\mathcal{C}_m$ and $C_1\in\mathcal{C}_m$. Since $\mathcal{C}_m={}^{\perp_1}(\mathcal{W}_2\cap\mathcal{F}_1)$ is closed under $\xi_1$-extensions, it follows that $C_m\in\mathcal{C}_m$.
	
	By the axioms of extriangulated categories, there exists a morphism $F_2\to F_m$ making the upper-left square commute. Completing this morphism to a triangle yields the first column:
	\[ \xymatrix@C=2em{F_2 \ar[r] & F_m \ar[r] & C_1 \ar@{-->}[r] & } \]
	Noting that both the first row and the second column lie in $\xi_2$ (since $\xi_1 \subseteq \xi_2$), by the saturation property of the proper class $\xi_2$, we conclude that this first column triangle also belongs to $\xi_2$.
	
	Since both $F_2$ and $C_1$ belong to $\mathcal{W}_2$, and $\mathcal{W}_2$ is closed under $\xi_2$-extensions, we obtain $F_m\in\mathcal{W}_2$. Together with $F_m\in\mathcal{F}_1$, this shows $F_m\in\mathcal{W}_2\cap\mathcal{F}_1$.
	
	Therefore, the desired commutative diagram exists.
\end{proof}

\begin{lem}\label{lem:extension-comparison}
	Any $\xi_2$-triangle

\[
\xymatrix@C=2em{
A \ar[r] &
	B \ar[r] &
	C\ar@{-->}[r] &
	{}
}
\]
	with $C\in\mathcal{C}_2$ is necessarily a $\xi_1$-triangle. Consequently,
	$
	\mathbb{E}_{2}(C,X)
	=
	\mathbb{E}_{1}(C,X)
	$
	whenever $C\in\mathcal{C}_2$.
\end{lem}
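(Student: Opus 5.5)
The plan is to use the dual form of saturation from the Remark following Definition~\ref{def:proper class}, in the variant built on Lemma~\ref{lem1}(2). The idea is to push the given $\xi_2$-extension out along a suitable $\xi_1$-inflation $A\to Y$ with $Y\in\mathcal{W}_2\cap\mathcal{F}_2$. Orthogonality in $\mathcal{M}_2$ forces the pushed-out extension to split, and saturation of $\xi_1$ then pulls the conclusion back to the original triangle.

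Concretely, let $A\xrightarrow{x_1}B\xrightarrow{y_1}C\overset{\delta}{\dashrightarrow}$ be a $\xi_2$-triangle with $C\in\mathcal{C}_2$. Since $\mathcal{M}_1$ is admissible relative to $\xi_1$, the cotorsion pair $(\mathcal{C}_1,\mathcal{W}_1\cap\mathcal{F}_1)$ relative to $\xi_1$ is complete. Hence there is a $\xi_1$-triangle
\[
A\xrightarrow{x_2}Y\xrightarrow{y_2}X\overset{\delta_2}{\dashrightarrow}
\]
with $Y\in\mathcal{W}_1\cap\mathcal{F}_1$. By the standing hypotheses $\mathcal{W}_1\subseteq\mathcal{W}_2$ and $\mathcal{F}_1\subseteq\mathcal{F}_2$, we get $Y\in\mathcal{W}_2\cap\mathcal{F}_2$.

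Now apply Lemma~\ref{lem1}(2) to the pair $\delta$ and $\delta_2$, which share the first term $A$. This produces an object $M$ and an $\mathbb{E}$-triangle $Y\xrightarrow{m_1}M\xrightarrow{e_1}C\overset{x_{2*}\delta}{\dashrightarrow}$. Since $\xi_2$ is closed under cobase change, $x_{2*}\delta\in\mathbb{E}_2(C,Y)$. Because $C\in\mathcal{C}_2={}^{\perp_2}(\mathcal{W}_2\cap\mathcal{F}_2)$ and $Y\in\mathcal{W}_2\cap\mathcal{F}_2$, we have $\mathbb{E}_2(C,Y)=0$, so $x_{2*}\delta=0$. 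The triangle realizing it is therefore split, and it lies in $\xi_1$ since $\Delta_0\subseteq\xi_1$.

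We are now exactly in the situation of the Remark (saturation in the form of Lemma~\ref{lem1}(2)) for the proper class $\xi_1$. The triangle $A\xrightarrow{x_2}Y\to X\dashrightarrow$ lies in $\xi_1$, and so does $Y\to M\to C\overset{x_{2*}\delta}{\dashrightarrow}$. Saturation of $\xi_1$ then gives that $A\xrightarrow{x_1}B\xrightarrow{y_1}C\overset{\delta}{\dashrightarrow}$ lies in $\xi_1$, which is the first claim.

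For the consequence, the argument above shows $\mathbb{E}_2(C,X)\subseteq\mathbb{E}_1(C,X)$ for every $X$ whenever $C\in\mathcal{C}_2$. The reverse inclusion always holds because $\xi_1\subseteq\xi_2$, as noted in the proof of Lemma~\ref{lem:containments}(1), so the two groups are equal.

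The only delicate point is matching the shape of the diagram in Lemma~\ref{lem1}(2) with the saturation statement in the Remark. One must check that the roles of $(x_1,\delta_1)$ and $(x_2,\delta_2)$ are assigned correctly: $\delta_1=\delta$ is the triangle to be proved in $\xi_1$, and $\delta_2$ is the chosen $\xi_1$-approximation. Once this is done, everything else is a direct application of completeness, orthogonality and the closure properties of proper classes.
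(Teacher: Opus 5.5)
Your proposal is correct and follows the paper's proof essentially verbatim. The paper likewise takes a $\xi_1$-triangle $A\to R\to Q$ with $R\in\mathcal{W}_1\cap\mathcal{F}_1\subseteq\mathcal{W}_2\cap\mathcal{F}_2$ from the complete pair $(\mathcal{C}_1,\mathcal{W}_1\cap\mathcal{F}_1)$, observes that the cobase-changed $\xi_2$-triangle $R\to P\to C$ splits because $C\in\mathcal{C}_2$, and concludes by saturation of $\xi_1$ in the form of the Remark; your version is, if anything, more explicit about the role assignment in Lemma~\ref{lem1}(2) and about the inclusion $\mathbb{E}_1(C,X)\subseteq\mathbb{E}_2(C,X)$ needed for the stated equality.
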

\begin{proof}Consider the cobase change diagram
	\begin{displaymath}
		\xymatrix{A \ar[r]\ar[d] & B \ar[r]\ar[d]^{} & C \ar@{=}[d] & \\
			R \ar[r]^{} \ar[d] & P \ar[r] \ar[d] & C  &  \\
			Q \ar@{=}[r]  & Q  &  & \\
			 }
	\end{displaymath}
	where the first column arises from the complete cotorsion pair
	$(\mathcal{C}_1,\mathcal{W}_1 \cap \mathcal{F}_1)$ with respect to $\xi_1$.
	
	Note that $R \in \mathcal{W}_1 \cap \mathcal{F}_1 \subseteq \mathcal{W}_2 \cap \mathcal{F}_2$. Moreover, the second row also lies in $\xi_2$, hence it is split. In particular, it belongs to $\xi_1$.
	Therefore, by the saturation property of proper classes, we conclude that
\[
\xymatrix@C=2em{
	A \ar[r] &
	B \ar[r] &
	C\ar@{-->}[r] &
	{}
}
\]
	is in $\xi_1$.
\end{proof}
\begin{cor}
		\[
	(\mathcal{C}_m)^{\perp_1}=\mathcal{W}_2\cap \mathcal{F}_1.
	\]
\end{cor}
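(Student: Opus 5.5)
The inclusion $\mathcal{W}_2\cap\mathcal{F}_1\subseteq(\mathcal{C}_m)^{\perp_1}$ is immediate from the definition $\mathcal{C}_m={}^{\perp_1}(\mathcal{W}_2\cap\mathcal{F}_1)$. The work lies in the reverse inclusion. Let $Y\in(\mathcal{C}_m)^{\perp_1}$; we must show $Y\in\mathcal{F}_1$ and $Y\in\mathcal{W}_2$.

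For $Y\in\mathcal{F}_1$, recall $\mathcal{C}_1\cap\mathcal{W}_1=\mathcal{C}_m\cap\mathcal{W}_2\subseteq\mathcal{C}_m$ by the lemma preceding Lemma~\ref{lem:mixed-approximation}. Hence $Y\in(\mathcal{C}_1\cap\mathcal{W}_1)^{\perp_1}=\mathcal{F}_1$, since $(\mathcal{C}_1\cap\mathcal{W}_1,\mathcal{F}_1)$ is a cotorsion pair relative to $\xi_1$.

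For $Y\in\mathcal{W}_2$, the plan is to show $Y\in(\mathcal{C}_2)^{\perp_2}$, which equals $\mathcal{W}_2\cap\mathcal{F}_2$ because $(\mathcal{C}_2,\mathcal{W}_2\cap\mathcal{F}_2)$ is a cotorsion pair relative to $\xi_2$. Fix $C\in\mathcal{C}_2$.
\begin{enumerate}
\item By Lemma~\ref{lem:extension-comparison}, $\mathbb{E}_2(C,Y)=\mathbb{E}_1(C,Y)$.
\item By Lemma~\ref{lem:containments}(2), $C\in\mathcal{C}_2\subseteq\mathcal{C}_m$.
\item Since $Y\in(\mathcal{C}_m)^{\perp_1}$, we get $\mathbb{E}_1(C,Y)=0$, and therefore $\mathbb{E}_2(C,Y)=0$.
\end{enumerate}
Thus $Y\in\mathcal{W}_2\cap\mathcal{F}_2$, in particular $Y\in\mathcal{W}_2$. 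Combined with the previous paragraph, $Y\in\mathcal{W}_2\cap\mathcal{F}_1$.

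The only delicate point is this comparison of $\mathbb{E}_2$ with $\mathbb{E}_1$ on objects of $\mathcal{C}_2$. It is exactly what Lemma~\ref{lem:extension-comparison} provides, so no further obstacle is expected. One should just check that the equality $\mathbb{E}_2(C,X)=\mathbb{E}_1(C,X)$ is read correctly: every $\xi_2$-extension with third term $C\in\mathcal{C}_2$ already lies in $\xi_1$, so vanishing of $\mathbb{E}_1(C,Y)$ forces vanishing of $\mathbb{E}_2(C,Y)$.

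This also yields the corollary that $(\mathcal{C}_m,\mathcal{W}_2\cap\mathcal{F}_1)$ is a cotorsion pair relative to $\xi_1$. One still needs $\mathcal{C}_m$ and $\mathcal{W}_2\cap\mathcal{F}_1$ to be additive and closed under direct summands; this holds because both are orthogonal classes.
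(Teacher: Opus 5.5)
Your proposal is correct and is essentially the paper's proof. The paper likewise gets $(\mathcal{C}_m)^{\perp_1}\subseteq(\mathcal{C}_2)^{\perp_1}=(\mathcal{C}_2)^{\perp_2}=\mathcal{W}_2\cap\mathcal{F}_2$ from $\mathcal{C}_2\subseteq\mathcal{C}_m$ and Lemma~\ref{lem:extension-comparison}, and $(\mathcal{C}_m)^{\perp_1}\subseteq(\mathcal{C}_1\cap\mathcal{W}_1)^{\perp_1}=\mathcal{F}_1$ from $\mathcal{C}_1\cap\mathcal{W}_1\subseteq\mathcal{C}_m$, with the reverse inclusion read off from the definition of $\mathcal{C}_m$; you have just written it out elementwise.
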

\begin{proof}
	$(\mathcal{C}_m)^{\perp_1}\subseteq(\mathcal{C}_2)^{\perp_1}=(\mathcal{C}_2)^{\perp_2}=\mathcal{W}_2\cap \mathcal{F}_2$.
	Note that $\mathcal{C}_1\cap \mathcal{W}_1 = \mathcal{C}_m\cap \mathcal{W}_2\subseteq \mathcal{C}_m,\ 
	(\mathcal{C}_m)^{\perp_1}\subseteq  \mathcal{F}_1$. Conversely,
	$(\mathcal{C}_m)^{\perp_1}\subseteq \mathcal{W}_2\cap \mathcal{F}_1.$ The reverse inclusion follows immediately from the definition of $ \mathcal {C}_m$.
\end{proof}
With the tools established in the previous lemmas, we are now ready to prove the existence of the mixed model structure.
\subsection{Proof of Theorem~\ref{prop:key_prop}}

We use the aforementioned results to prove (1). Statement (2) follows from duality.\\

\begin{proof}
	We first show that \(\mathcal{W}_2\) is thick with respect to \(\xi_1\)-triangles. Indeed, let
\[
\xymatrix@C=2em{
	A \ar[r] &
	B \ar[r] &
	C\ar@{-->}[r] &
	{}
}
\]
	be a \(\xi_1\)-triangle. Since \(\xi_1\subseteq\xi_2\), this is also a \(\xi_2\)-triangle. As \(\mathcal{W}_2\) is thick with respect to \(\xi_2\), whenever two of \(A,B,C\) belong to \(\mathcal{W}_2\), so does the third.
	
	Next, by Lemma 3.4,
	\[
	(\mathcal{C}_m\cap\mathcal{W}_2,\mathcal{F}_1)
	=
	(\mathcal{C}_1\cap\mathcal{W}_1,\mathcal{F}_1),
	\]
	which is a complete cotorsion pair with respect to $\xi_1$.
	
	On the other hand, by Corollary 3.7,
	\[
	(\mathcal{C}_m,\mathcal{F}_1\cap\mathcal{W}_2)
	\]
	is a cotorsion pair with respect to $\xi_1$. It remains to prove that it is complete.
	
	By Lemma 3.5, for every object \(A\in\mathscr{C}\), there exists a \(\xi_1\)-triangle
	\[
	\xymatrix@C=2em{
		F_m \ar[r] &
		C_m \ar[r] &
		A\ar@{-->}[r] &
		{}
	}
	\]
	with
	\(
	C_m\in\mathcal{C}_m
	\)
	and
	\(
	F_m\in\mathcal{W}_2\cap\mathcal{F}_1.
	\)
	
	Thus it suffices to show that for every object \(A\in\mathscr{C}\), there exists a \(\xi_1\)-triangle
	\[
\xymatrix@C=2em{
	A \ar[r] &
	P \ar[r] &
	C_m\ar@{-->}[r] &
	{}
}
\]
	with
	\(
	P\in\mathcal{W}_2\cap\mathcal{F}_1
	\)
	and
	\(
	C_m\in\mathcal{C}_m.
	\)
	
	Since
	\(
	(\mathcal{C}_1,\mathcal{F}_1\cap\mathcal{W}_1)
	\)
	is a complete cotorsion pair with respect to $\xi_1$, there exists a \(\xi_1\)-triangle
	\[
	\xymatrix@C=2em{
		A \ar[r] &
		F_1 \ar[r] &
		C_1\ar@{-->}[r] &
		{}
	}
	\]
	with
	\(
	F_1\in\mathcal{F}_1\cap\mathcal{W}_1
	\)
	and
	\(
	C_1\in\mathcal{C}_1.
	\)
	
	Applying the first half of completeness to \(C_1\), there exists a \(\xi_1\)-triangle
	\[
	\xymatrix@C=2em{
		F_m \ar[r] &
		C_m \ar[r] &
		C_1\ar@{-->}[r] &
		{}
	}
	\]
	with
	\(
	F_m\in\mathcal{W}_2\cap\mathcal{F}_1
	\)
	and
	\(
	C_m\in\mathcal{C}_m.
	\)
	
Now consider the following base change diagram:
\[
\xymatrix{
	& F_m \ar@{=}[r] \ar[d]
	& F_m \ar[d]
	&
	\\
	A \ar[r] \ar@{=}[d]
	& P \ar[r] \ar[d]
	& C_m  \ar[d]
	& 
	\\
	A \ar[r]
	& F_1 \ar[r] 
	& C_1  
	& 
	\\
	& 
	& 
	&
}
\]

Since \(\xi_1\) is closed under base change, the second row and the second column are both \(\xi_1\)-triangles. Since
\(
F_1,F_m\in\mathcal{W}_2\cap\mathcal{F}_1,
\)
it follows that
\(
P\in\mathcal{W}_2\cap\mathcal{F}_1.
\)
Hence the second row is the desired \(\xi_1\)-triangle.

\end{proof}

\begin{cor}
	Under the assumptions of Theorem 3.1 (1), if the admissible model structure $\mathcal{M}_1$ is hereditary, then the mixed model structure $\mathcal{M}_m$ is also hereditary.
\end{cor}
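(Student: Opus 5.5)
Being hereditary means that both complete cotorsion pairs of the Hovey triple relative to $\xi_1$ are hereditary. For $\mathcal{M}_m$ these pairs are $(\mathcal{C}_m\cap\mathcal{W}_2,\mathcal{F}_1)$ and $(\mathcal{C}_m,\mathcal{W}_2\cap\mathcal{F}_1)$. The first coincides with $(\mathcal{C}_1\cap\mathcal{W}_1,\mathcal{F}_1)$ by Lemma 3.4, and this pair is hereditary because $\mathcal{M}_1$ is. So the work lies entirely in the second pair. For it I must show two things: $\mathcal{C}_m$ is closed under cocones of $\xi_1$-deflations, and $\mathcal{W}_2\cap\mathcal{F}_1$ is closed under cones of $\xi_1$-inflations.

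The right half is direct. Let $A\to B\to C\dashrightarrow$ be a $\xi_1$-triangle with $A,B\in\mathcal{W}_2\cap\mathcal{F}_1$.
\begin{itemize}
\item Since $(\mathcal{C}_1\cap\mathcal{W}_1,\mathcal{F}_1)$ is hereditary, $C\in\mathcal{F}_1$.
\item The triangle is also a $\xi_2$-triangle, so $C\in\mathcal{W}_2$ by two-out-of-three for $\mathcal{W}_2$.
\end{itemize}
Hence $C\in\mathcal{W}_2\cap\mathcal{F}_1$.

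For the left half, let $A\to B\to C\dashrightarrow$ be a $\xi_1$-triangle with $B,C\in\mathcal{C}_m$. The plan is to use the standard criterion: in a complete cotorsion pair relative to $\xi_1$, the left class is closed under cocones of $\xi_1$-deflations if and only if the right class is closed under cones of $\xi_1$-inflations. I would reprove this criterion in the relative setting to be safe, as follows.
\begin{itemize}
\item Take $Y\in\mathcal{W}_2\cap\mathcal{F}_1$ and $\eta\in\mathbb{E}_1(A,Y)$; the goal is $\eta=0$.
\item Using completeness, choose a $\xi_1$-triangle $Y\to I\to Z\dashrightarrow$ with $I\in\mathcal{W}_2\cap\mathcal{F}_1$ and $Z\in\mathcal{C}_m$. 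By Corollary 3.7, $Z$ is then the cone of a $\xi_1$-inflation between objects of $\mathcal{W}_2\cap\mathcal{F}_1$, so by the right half $Z\in\mathcal{W}_2\cap\mathcal{F}_1$.
\item Apply the long exact sequences for the extriangulated category $(\mathscr{C},\mathbb{E}_{\xi_1},\mathfrak{s}_{\xi_1})$ (Theorem 2.5). First, $\mathbb{E}_1(A,I)$ sits between $\mathbb{E}_1(B,I)=0$ and $\mathbb{E}_1^2(C,I)$.
\item This is where the difficulty lies: higher extensions $\mathbb{E}^2$ are not available in a general extriangulated category.
\end{itemize}

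To avoid $\mathbb{E}^2$, I would instead argue by dimension shifting at the level of triangles.
\begin{itemize}
\item Realize $\eta$ by a $\xi_1$-triangle $Y\to E\to A\dashrightarrow$.
\item Use completeness of $(\mathcal{C}_m,\mathcal{W}_2\cap\mathcal{F}_1)$ to embed $Y$ into $I$ as above, and push $\eta$ out along $Y\to I$.
\item The pushed-out extension lies in $\mathbb{E}_1(A,I)$. Extend it along $A\to B$ using the exact sequence $\mathbb{E}_1(B,I)\to\mathbb{E}_1(A,I)$; here the needed connecting map comes from the triangle $A\to B\to C$.
\item Then use $\mathbb{E}_1(C,Z)=0$ and the sequence $\mathcal{C}(A,Z)\to\mathbb{E}_1(A,Y)\to\mathbb{E}_1(A,I)$ to conclude $\eta=0$.
\end{itemize}
This is the main obstacle: showing that the relevant connecting morphism vanishes, which is the extriangulated replacement for $\mathbb{E}^2(C,Y)=0$.

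If that route fails, the fallback is Lemma 3.5. It would give a $\xi_1$-resolution of $A$ by objects of $\mathcal{C}_m$, and the aim would be to identify $A$ as a direct summand, mirroring the proof of Lemma 3.4.
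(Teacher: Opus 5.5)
Your reduction to the pair $(\mathcal{C}_m,\mathcal{W}_2\cap\mathcal{F}_1)$, and your proof that $\mathcal{W}_2\cap\mathcal{F}_1$ is closed under cones of $\xi_1$-inflations, are exactly the paper's argument. The paper then simply declares that this suffices, i.e.\ it takes for granted the criterion you quote. Had you cited that criterion, your proof would match the paper's. But you chose to reprove it, and that part has a genuine gap which your proposal leaves open.

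The step you sketch fails concretely. For $Y\in\mathcal{W}_2\cap\mathcal{F}_1$, the $\xi_1$-triangle $Y\to I\to Z\dashrightarrow$ with $Z\in\mathcal{C}_m$ splits, because $\mathbb{E}_1(Z,Y)=0$ by the definition of $\mathcal{C}_m$. So pushing $\eta$ out along $Y\to I$ merely replaces $Y$ by another object of the same class and gains nothing. You would then still need $\mathbb{E}_1(B,I)\to\mathbb{E}_1(A,I)$ to be surjective. That is precisely the missing $\mathbb{E}^2$-information, and the long exact sequences do not provide it. The fallback via Lemma 3.5 is not yet an argument.

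The missing idea is to shift the \emph{middle term} of $\eta$ rather than $Y$.
\begin{itemize}
\item Realize $\eta\in\mathbb{E}_1(A,Y)$ by a $\xi_1$-triangle $Y\xrightarrow{i}E\xrightarrow{e}A\overset{\eta}{\dashrightarrow}$.
\item By completeness, choose a $\xi_1$-triangle $E\xrightarrow{j}I\to Z\dashrightarrow$ with $I\in\mathcal{W}_2\cap\mathcal{F}_1$.
\item Axiom (ET4) in $(\mathscr{C},\mathbb{E}_{\xi_1},\mathfrak{s}_{\xi_1})$ gives a $\xi_1$-triangle $Y\xrightarrow{ji}I\to K\overset{\eta'}{\dashrightarrow}$ and a morphism $d\colon A\to K$ with $d^*\eta'=\eta$.
\item Your right half gives $K\in\mathcal{W}_2\cap\mathcal{F}_1$.
\end{itemize}
From here only Hom-level exactness is needed. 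Since $\mathbb{E}_1(C,K)=0$, the exact sequence $\mathscr{C}(B,K)\to\mathscr{C}(A,K)\to\mathbb{E}_1(C,K)$ attached to $A\xrightarrow{x}B\to C\dashrightarrow$ yields $d=d'x$ for some $d'\colon B\to K$. Hence $\eta=x^*(d'^*\eta')$ with $d'^*\eta'\in\mathbb{E}_1(B,Y)=0$. Therefore $\eta=0$ and $A\in\mathcal{C}_m$. This closes the gap without any $\mathbb{E}^2$, and it also justifies the paper's ``it suffices''.
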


\begin{proof}
	The fact that the cotorsion pair $(\mathcal{C}_m\cap\mathcal{W}_2, \mathcal{F}_1)$ is hereditary follows trivially from the hereditary property of $\mathcal{M}_1$.
	
	It suffices to show that $\mathcal{W}_2\cap\mathcal{F}_1$ is closed under $\xi_1$-cones. Let
	\[ X \longrightarrow Y \longrightarrow Z \dashrightarrow \]
	be a $\xi_1$-triangle with $X, Y\in\mathcal{W}_2\cap\mathcal{F}_1$. Since $\xi_1\subseteq\xi_2$, this is also a $\xi_2$-triangle. Because $\mathcal{W}_2$ is thick with respect to $\xi_2$-triangles, we obtain $Z\in\mathcal{W}_2$.
	
	On the other hand, $\mathcal{F}_1$ is closed under $\xi_1$-cones (since $\mathcal{M}_1$ is hereditary), so $Z\in\mathcal{F}_1$. Thus $Z\in\mathcal{W}_2\cap\mathcal{F}_1$.
	
	Therefore, $\mathcal{W}_2\cap\mathcal{F}_1$ is closed under $\xi_1$-cones, and hence the mixed model structure $\mathcal{M}_m$ is hereditary.
\end{proof}

\subsection{Coﬁbrant Objects in Mixed Extriangulated Model Structures}
\noindent\\

Having established the existence of the mixed model structure $\mathcal{M}_m$, we now turn our attention to explicitly characterizing its cofibrant objects. As a first step, we compare the classes of morphisms in $\mathcal{M}_m$ with those in $\mathcal{M}_1$ and $\mathcal{M}_2$.

\begin{prop}
	Let $(\mathscr{C},\mathbb{E},\mathfrak{s})$ be a weakly idempotent complete extriangulated category, and let $\mathcal{M}_m=(\mathcal{C}_m,\mathcal{W}_2,\mathcal{F}_1)$ be the mixed model structure relative to $\xi_1$ constructed above, and let $f$ be a morphism in $\mathscr{C}$.
	We have the following relationships between the classes of morphisms.
	\begin{enumerate}
		\item
		$f$ is an $m$-fibration if and only if $f$ is a $1$-fibration.
		
		\item
		$f$ is an $m$-trivial cofibration if and only if $f$ is a $1$-trivial cofibration.
		
		\item
		If $f$ is a $2$-cofibration, then $f$ is an $m$-cofibration. Moreover, if $f$ is an $m$-cofibration, then $f$ is a $1$-cofibration.
		
		\item
		If $f$ is a $1$-trivial fibration, then $f$ is an $m$-trivial fibration. Moreover, if $f$ is an $m$-trivial fibration, then $f$ is a $2$-trivial fibration.
		
		\item
		The class of $m$-weak equivalences coincides with the class of $2$-weak equivalences, and contains the class of all $1$-weak equivalences.
	\end{enumerate}
\end{prop}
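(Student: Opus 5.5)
By Corollary 2.13, every class of morphisms in $\mathcal{M}_1$, $\mathcal{M}_2$ and $\mathcal{M}_m$ is determined by its Hovey triple together with the relevant proper class. So the plan is to translate each statement into a statement about cones and cocones and then use the object-level comparisons already established.

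\emph{Items (1)--(4).}
\begin{enumerate}
\item[(1)] An $m$-fibration is a $\xi_1$-deflation with cocone in $\mathcal{F}_1$, which is literally the definition of a $1$-fibration.
\item[(2)] An $m$-trivial cofibration is a $\xi_1$-inflation with cone in $\mathcal{C}_m\cap\mathcal{W}_2$. This class equals $\mathcal{C}_1\cap\mathcal{W}_1$ by Lemma 3.4, which gives exactly the $1$-trivial cofibrations.
\item[(3)] Let $f$ be a $2$-cofibration, so $f$ is a $\xi_2$-inflation whose cone lies in $\mathcal{C}_2$. By Lemma~\ref{lem:extension-comparison}, the $\xi_2$-triangle it sits in is a $\xi_1$-triangle. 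Since $\mathcal{C}_2\subseteq\mathcal{C}_m$ by Lemma~\ref{lem:containments}, $f$ is an $m$-cofibration. An $m$-cofibration is a $1$-cofibration because $\mathcal{C}_m\subseteq\mathcal{C}_1$.
\item[(4)] A $1$-trivial fibration has cocone in $\mathcal{F}_1\cap\mathcal{W}_1\subseteq\mathcal{F}_1\cap\mathcal{W}_2$. An $m$-trivial fibration is a $\xi_1$-deflation, hence a $\xi_2$-deflation, with cocone in $\mathcal{F}_1\cap\mathcal{W}_2\subseteq\mathcal{F}_2\cap\mathcal{W}_2$.
\end{enumerate}

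\emph{Item (5): the key lemma.} Here I first isolate a general fact about any admissible model structure $(\mathcal{C},\mathcal{W},\mathcal{F})$ relative to a proper class $\xi$. The claim is that a $\xi$-inflation with cone in $\mathcal{W}$ is a weak equivalence, and that a $\xi$-deflation is a weak equivalence if and only if its cocone lies in $\mathcal{W}$.

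For the inflation part, factor the inflation $i$ as a trivial cofibration $j$ followed by a fibration $q$. Using (ET4) and the $3\times3$ diagrams of Lemma~\ref{lem1}, I would show $\mathrm{CoCone}(q)\in\mathcal{W}$ via two-out-of-three for $\mathcal{W}$ on $\xi$-triangles. Then $q$ is a trivial fibration, and $i=qj$ is a weak equivalence. The deflation statement is dual in one direction.

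The converse, that a deflation which is a weak equivalence has trivial cocone, is the step I expect to be the main obstacle. I would factor $p=q'j'$ with $j'$ a trivial cofibration and $q'$ a fibration. Then $q'$ is a weak equivalence by (CM1), and a fibration that is a weak equivalence is a trivial fibration; this is a standard consequence of lifting and retracts. Comparing $\mathrm{CoCone}(p)$ with $\mathrm{CoCone}(q')$ through an (ET4)$^{\rm op}$ diagram, whose remaining term is $\mathrm{Cone}(j')\in\mathcal{W}$, then gives $\mathrm{CoCone}(p)\in\mathcal{W}$. If available, I would instead cite the corresponding statement from \cite[Section 5]{NP}.

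\emph{Item (5): the comparison of weak equivalences.}
\begin{itemize}
\item $\mathrm{Weq}_m\subseteq\mathrm{Weq}_2$. Write an $m$-weak equivalence as $pi$. By (2), $i$ is a $1$-trivial cofibration, i.e.\ a $\xi_1$-inflation, hence a $\xi_2$-inflation, with cone in $\mathcal{W}_1\subseteq\mathcal{W}_2$; by the key lemma it is a $2$-weak equivalence. By (4), $p$ is a $2$-trivial fibration. So $pi$ is a $2$-weak equivalence by (CM1) in $\mathcal{M}_2$.
\item $\mathrm{Weq}_2\subseteq\mathrm{Weq}_m$. Take a $2$-weak equivalence $f$ and factor it in $\mathcal{M}_m$ as $f=pi$, with $i$ an $m$-trivial cofibration and $p$ an $m$-fibration. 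By the previous paragraph $i$ is a $2$-weak equivalence, so $p$ is one by (CM1). Now $p$ is a $\xi_2$-deflation and a $2$-weak equivalence, so the key lemma gives $\mathrm{CoCone}(p)\in\mathcal{F}_1\cap\mathcal{W}_2$. Thus $p$ is an $m$-trivial fibration and $f\in\mathrm{Weq}_m$.
\item $\mathrm{Weq}_1\subseteq\mathrm{Weq}_m$. A $1$-weak equivalence has the form $pi$ with $i$ a $1$-trivial cofibration and $p$ a $1$-trivial fibration. By (2) and (4) these are an $m$-trivial cofibration and an $m$-trivial fibration respectively, so $pi\in\mathrm{Weq}_m$.
\end{itemize}
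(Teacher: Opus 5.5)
Your proof is correct and follows the paper's argument, in places more carefully.

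For (1)--(4) the paper only says they are immediate. Your appeal to Lemma~\ref{lem:extension-comparison} in (3) supplies the point it glosses over: a $\xi_2$-inflation with cone in $\mathcal{C}_2$ is actually a $\xi_1$-inflation. Your direct argument for (4) replaces the paper's unexplained ``(4) follows from (3)''.

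In (5) the paper uses your key lemma without stating it. It says ``since $L\in\mathcal{W}_2$, $i$ is a $2$-weak equivalence'', and later ``the $\xi_2$-triangle $X\to Z\to L$ shows $L\in\mathcal{W}_2$''. The differences in (5) are these:
\begin{itemize}
\item For $\mathrm{Weq}_2\subseteq\mathrm{Weq}_m$ the paper factors $f$ as an $m$-cofibration followed by an $m$-trivial fibration, so it needs the inflation form of the converse. Your factorization is the dual one and needs the deflation form.
\item For $\mathrm{Weq}_1\subseteq\mathrm{Weq}_m$ the paper passes through $\mathrm{Weq}_2$ and the key lemma. Your use of (2) and (4) is direct and needs no lemma.
\end{itemize}

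The weak point is your sketch of the key lemma. In $i=qj$ and $p=q'j'$ an inflation is followed by a deflation. Neither (ET4), (ET4)$^{\rm op}$ nor Lemma~\ref{lem1} then gives you a triangle relating $\mathrm{CoCone}(q)$, $\mathrm{Cone}(j)$, $\mathrm{Cone}(i)$ (resp.\ $\mathrm{CoCone}(p)$, $\mathrm{CoCone}(q')$, $\mathrm{Cone}(j')$). You would also need that triangle to lie in $\xi$. As written, that step is unjustified.

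It is easier not to compare two factorizations at all:
\begin{itemize}
\item Let $X\xrightarrow{i}Y\to C\dashrightarrow$ be a $\xi$-triangle with $C\in\mathcal{W}$. Choose a $\xi$-triangle $V\to U\to C\dashrightarrow$ with $U\in\mathcal{C}$ and $V\in\mathcal{F}\cap\mathcal{W}$; then $U\in\mathcal{C}\cap\mathcal{W}$. Lemma~\ref{lem1}(1) gives $i=e_2m_1$, with $m_1$ a $\xi$-inflation with cone $U$ and $e_2$ a $\xi$-deflation with cocone $V$. Both triangles lie in $\xi$ by base change. So $i$ is a weak equivalence by the explicit description of $\mathrm{Weq}_\xi$ in Corollary 2.13.
\item Let $D\to X\xrightarrow{p}Y\dashrightarrow$ be a $\xi$-triangle with $p$ a weak equivalence. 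Choose a $\xi$-triangle $D\to V\to U\dashrightarrow$ with $V\in\mathcal{F}\cap\mathcal{W}$ and $U\in\mathcal{C}$. Lemma~\ref{lem1}(2) gives $p=e_1m_2$, with $e_1$ a trivial fibration and $m_2$ a cofibration with cone $U$. By (CM1) $m_2$ is a trivial cofibration, so $U\in\mathcal{C}\cap\mathcal{W}$ by admissibility, and $D\in\mathcal{W}$ by two-out-of-three.
\end{itemize}
The dual arguments give the remaining cases, including the inflation form the paper uses.

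Also, ``a fibration that is a weak equivalence is a trivial fibration'' holds simply because $\mathrm{TFib}=\mathrm{Fib}\cap\mathrm{Weq}$, combined with admissibility. No lifting or retract argument is needed.
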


\begin{proof}
	The first three statements follow immediately from the definitions. Moreover, (4) follows directly from (3). Thus it remains to prove (5).
	
	We first show that every $m$-weak equivalence is a $2$-weak equivalence. Let
	\(
	f:X\to Y
	\)
	be an $m$-weak equivalence. Consider a factorization of \(f\) in the mixed model structure \(\mathcal{M}_m\):
	\[
	\begin{tikzcd}[column sep=small]
		X \arrow[rr,"f"] \arrow[dr,"i"']
		&& Y \\
		& Z \arrow[ur,"p"'] &
	\end{tikzcd}
	\]
	where

	\[
	\xymatrix@C=2em{
		X  \ar[r]^i &
		Z \ar[r] &
		L\ar@{-->}[r] &
		{}
	}
	\]
	and

	\[
	\xymatrix@C=2em{
		K \ar[r] &
		Z \ar[r]^p &
		 Y\ar@{-->}[r] &
		{}
	}
	\]
	are \(\xi_1\)-triangles with
	\(
	L\in\mathcal{C}_m\cap\mathcal{W}_2
	\)
	and
	\(
	K\in\mathcal{F}_1\cap\mathcal{W}_2.
	\)
	
	Since \(L\in\mathcal{W}_2\), the morphism \(i\) is a $2$-weak equivalence. Similarly, \(p\) is also a $2$-weak equivalence. Since \(\mathcal{W}_2\) satisfies the two-out-of-three property, it follows that \(f\) is a $2$-weak equivalence.
	
	Conversely, let
	\(
	f:X\to Y
	\)
	be a $2$-weak equivalence. Consider a factorization of \(f\) in \(\mathcal{M}_m\):
	\[
	\begin{tikzcd}[column sep=small]
		X \arrow[rr,"f"] \arrow[dr,"i"']
		&& Y \\
		& Z \arrow[ur,"p"'] &
	\end{tikzcd}
	\]
	where

\[
\xymatrix@C=2em{
	X  \ar[r]^i &
	Z \ar[r] &
	L\ar@{-->}[r] &
	{}
}
\]
and

\[
\xymatrix@C=2em{
	K \ar[r] &
	Z \ar[r]^p &
	Y\ar@{-->}[r] &
	{}
}
\]
are \(\xi_1\)-triangles with
	\(
	L\in\mathcal{C}_m
	\)
	and
	\(
	K\in\mathcal{F}_1\cap\mathcal{W}_2.
	\)
	
	Since \(p\) is an $m$-trivial fibration, by (4) it is also a $2$-trivial fibration. Hence \(p\) is a $2$-weak equivalence. Since \(f\) is a $2$-weak equivalence, the two-out-of-three property implies that \(i\) is also a $2$-weak equivalence.
	
	Now the \(\xi_2\)-triangle
	\[
	\xymatrix@C=2em{
		X  \ar[r]^i &
		Z \ar[r] &
		L\ar@{-->}[r] &
		{}
	}
	\]
	shows that \(L\in\mathcal{W}_2\). Therefore,
	\(
	L\in\mathcal{C}_m\cap\mathcal{W}_2,
	\)
	so \(i\) is an $m$-trivial cofibration. Consequently, \(f\) is an $m$-weak equivalence.

	Finally, we show that every $1$-weak equivalence is an $m$-weak equivalence. Let $f$ be a $1$-weak equivalence. In the model structure $\mathcal{M}_1$, we can factor $f = p'i'$ where $i'$ is a $1$-trivial cofibration and $p'$ is a $1$-trivial fibration. By definition, $i'$ is a $\xi_1$-inflation with $\mathrm{Cone}(i') \in \mathcal{C}_1 \cap \mathcal{W}_1$, and $p'$ is a $\xi_1$-deflation with $\mathrm{CoCone}(p') \in \mathcal{F}_1 \cap \mathcal{W}_1$. Since $\xi_1 \subseteq \xi_2$ and $\mathcal{W}_1 \subseteq \mathcal{W}_2$, the morphism $i'$ is a $\xi_2$-inflation with its cone in $\mathcal{W}_2$, meaning $i'$ is a $2$-weak equivalence. Similarly, $p'$ is a $\xi_2$-deflation with its cocone in $\mathcal{W}_2$, meaning $p'$ is a $2$-weak equivalence. By the two-out-of-three property for $\mathcal{M}_2$, $f = p'i'$ is a $2$-weak equivalence, and therefore also an $m$-weak equivalence.

\end{proof}

\begin{prop}
	An m-weak equivalence between two $m$-cofibrant objects is necessarily a $1$-weak equivalence.
\end{prop}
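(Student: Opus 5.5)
The plan is to factor $f$ in the mixed model structure $\mathcal{M}_m$ and show that each factor is already a morphism of the right type in $\mathcal{M}_1$. The key inputs are Lemma 3.4, namely $\mathcal{C}_m\cap\mathcal{W}_2=\mathcal{C}_1\cap\mathcal{W}_1$, and the fact that $\mathcal{C}_m={}^{\perp_1}(\mathcal{W}_2\cap\mathcal{F}_1)$ is closed under $\xi_1$-extensions and direct summands.

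Let $f:X\to Y$ be an $m$-weak equivalence with $X,Y\in\mathcal{C}_m$. Factor $f=pi$ in $\mathcal{M}_m$, where $i$ is an $m$-cofibration and $p$ is an $m$-trivial fibration. Concretely, this gives $\xi_1$-triangles
\[
X\xrightarrow{i}Z\to L\dashrightarrow \quad\text{and}\quad K\to Z\xrightarrow{p}Y\dashrightarrow
\]
with $L\in\mathcal{C}_m$ and $K\in\mathcal{F}_1\cap\mathcal{W}_2$.

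First we show that $p$ is a $1$-trivial fibration.
\begin{enumerate}
\item Since $X,L\in\mathcal{C}_m$ and $\mathcal{C}_m$ is closed under $\xi_1$-extensions, the first triangle gives $Z\in\mathcal{C}_m$.
\item Since $Y\in\mathcal{C}_m$ and $K\in\mathcal{W}_2\cap\mathcal{F}_1$, we have $\mathbb{E}_1(Y,K)=0$, so the second triangle splits and $Z\cong K\oplus Y$.
\item Hence $K$ is a direct summand of $Z\in\mathcal{C}_m$, so $K\in\mathcal{C}_m$.
\item By Lemma 3.4, $K\in\mathcal{C}_m\cap\mathcal{W}_2\cap\mathcal{F}_1=\mathcal{C}_1\cap\mathcal{W}_1\cap\mathcal{F}_1$. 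In particular $K\in\mathcal{F}_1\cap\mathcal{W}_1$, so $p$ is a $1$-trivial fibration.
\end{enumerate}

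Next we show that $i$ is a $1$-trivial cofibration. Since $f$ and $p$ are $m$-weak equivalences, two-out-of-three gives that $i$ is an $m$-weak equivalence. I expect the only real obstacle to be showing that this forces $L\in\mathcal{W}_2$. I would argue exactly as in the proof of part (5) of the preceding proposition. By that part, $i$ is a $2$-weak equivalence, and $X\xrightarrow{i}Z\to L\dashrightarrow$ is a $\xi_2$-triangle with $i$ a $2$-weak equivalence, so $L\in\mathcal{W}_2$ as in that argument. (Alternatively, one can use the general model-category fact that a cofibration which is a weak equivalence is a trivial cofibration, together with admissibility of $\mathcal{M}_m$.) Then $L\in\mathcal{C}_m\cap\mathcal{W}_2=\mathcal{C}_1\cap\mathcal{W}_1$ by Lemma 3.4. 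Since $i$ is a $\xi_1$-inflation, it is a $1$-trivial cofibration.

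Therefore $f=pi$ is a composite of a $1$-trivial cofibration and a $1$-trivial fibration, hence a $1$-weak equivalence by the description of $\mathrm{Weq}_{\xi_1}$ in the Hovey correspondence (Corollary 2.13).
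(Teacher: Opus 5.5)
Your proof is correct and follows the paper's argument: factor $f$ in $\mathcal{M}_m$, split $K\to Z\to Y$ using $Y\in\mathcal{C}_m$, get $Z\in\mathcal{C}_m$ from extension-closure, so that $K\in\mathcal{C}_m\cap\mathcal{W}_2\cap\mathcal{F}_1=\mathcal{C}_1\cap\mathcal{W}_1\cap\mathcal{F}_1$ by Lemma 3.4. The only difference is that the paper starts from a factorization into an $m$-trivial cofibration and an $m$-trivial fibration (available since $f\in\mathrm{Weq}_m$), which makes your separate argument for $L\in\mathcal{W}_2$ unnecessary; of your two justifications for that step, the one via $\mathrm{TCoFib}=\mathrm{CoFib}\cap\mathrm{Weq}$ and admissibility of $\mathcal{M}_m$ is the self-contained one.
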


\begin{proof}
	By Proposition 3.9 (5), a morphism
	\(
	f:X\to Y
	\)
	is a $2$-weak equivalence if and only if it is an $m$-weak equivalence. Consider a factorization of \(f\) in the mixed model structure \(\mathcal{M}_m\):
	\[
	\begin{tikzcd}[column sep=small]
		X \arrow[rr,"f"] \arrow[dr,"i"']
		&& Y \\
		& Z \arrow[ur,"p"'] &
	\end{tikzcd}
	\]
	where
	
	\[
	\xymatrix@C=2em{
		X  \ar[r]^i &
		Z \ar[r] &
		L\ar@{-->}[r] &
		{}
	}
	\]
	and
	
	\[
	\xymatrix@C=2em{
		K \ar[r] &
		Z \ar[r]^p &
		Y\ar@{-->}[r] &
		{}
	}
	\]
	are \(\xi_1\)-triangles with
	\(
	L\in\mathcal{W}_2\cap\mathcal{C}_m
	=
	\mathcal{W}_1\cap\mathcal{C}_1
	\)
	and
	\(
	K\in\mathcal{F}_1\cap\mathcal{W}_2.
	\)
	
	Since \(Y\in\mathcal{C}_m\), the $\xi_1$-triangle
	\[
\xymatrix@C=2em{
	K \ar[r] &
	Z \ar[r]^p &
	Y\ar@{-->}[r] &
	{}
}
\]
	splits. Hence \(K\) is a direct summand of \(Z\). Since both \(X\) and \(L\) belong to \(\mathcal{C}_m\), and \(\mathcal{C}_m\) is closed under extensions, we obtain
	\(
	Z\in\mathcal{C}_m.
	\)
	Therefore,
	\(
	K\in\mathcal{C}_m.
	\)
	
	Consequently,
	\[
	K\in
	\mathcal{W}_2\cap\mathcal{F}_1\cap\mathcal{C}_m
	=
	\mathcal{W}_1\cap\mathcal{F}_1\cap\mathcal{C}_1.
	\]
	Thus \(p\) is a $1$-trivial fibration, and hence \(f\) is a $1$-weak equivalence.
\end{proof}

With the behavior of morphisms between $m$-cofibrant objects fully understood, we are now in a position to explicitly characterize the objects in $\mathcal{C}_m$.

\begin{prop}[Cofibrant Objects in Mixed Model Structures]\label{prop:cofibrant-characterization}
	Let $(\mathscr{C},\mathbb{E},\mathfrak{s})$ be a weakly idempotent complete extriangulated category, and let $\mathcal{M}_m=(\mathcal{C}_m,\mathcal{W}_2,\mathcal{F}_1)$ be the mixed model structure relative to $\xi_1$ constructed above.
	An object $C$ belongs to $\mathcal{C}_m$ if and only if
	\(C\in\mathcal{C}_1\)
	and there exists a $1$-weak equivalence
	\(C_2 \to C\)
	for some \(C_2\in\mathcal{C}_2\).
	
	Moreover, for every \(C\in\mathcal{C}_m\), there exists a $\xi_2$-deflation
	\(p:C_2\to C\)
	which is a $1$-weak equivalence.
\end{prop}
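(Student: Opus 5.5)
The plan is to prove the two implications separately, using Lemma \ref{lem:mixed-approximation} for the forward direction together with the "moreover" clause, and Proposition 3.10 (an $m$-weak equivalence between $m$-cofibrant objects is a $1$-weak equivalence) as the pivot for the converse.

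\emph{Forward direction.} Let $C\in\mathcal{C}_m$. By Lemma \ref{lem:containments}, $C\in\mathcal{C}_1$. Apply Lemma \ref{lem:mixed-approximation} to $A=C$. This gives the $\xi_2$-triangle $F_2\to C_2\xrightarrow{p} C\dashrightarrow$ with $C_2\in\mathcal{C}_2$ and $F_2\in\mathcal{W}_2\cap\mathcal{F}_2$, and the $\xi_1$-triangle $F_m\to C_m\xrightarrow{q} C\dashrightarrow$ with $F_m\in\mathcal{W}_2\cap\mathcal{F}_1$. Since $C\in\mathcal{C}_m={}^{\perp_1}(\mathcal{W}_2\cap\mathcal{F}_1)$, the second row splits. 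Hence $q$ is a split epimorphism with kernel $F_m$, so $C_m\cong F_m\oplus C$.

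Now $p$ is a $\xi_2$-deflation with cocone in $\mathcal{W}_2\cap\mathcal{F}_2$, so $p$ is a $2$-trivial fibration, hence a $2$-weak equivalence. By Proposition 3.9(5), $p$ is therefore an $m$-weak equivalence. Its source $C_2$ lies in $\mathcal{C}_2\subseteq\mathcal{C}_m$ and its target $C$ lies in $\mathcal{C}_m$, so Proposition 3.10 shows that $p$ is a $1$-weak equivalence. This proves both the forward implication and the ``moreover'' statement, with the $\xi_2$-deflation $p:C_2\to C$. The splitting of $q$ is not even needed for this.

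\emph{Converse.} Suppose $C\in\mathcal{C}_1$ and $w:C_2\to C$ is a $1$-weak equivalence with $C_2\in\mathcal{C}_2$. Factor $w$ in $\mathcal{M}_1$ as $w=\pi\iota$, where $\iota:C_2\to Z$ is a $1$-cofibration with cone $L\in\mathcal{C}_1$ and $\pi$ is a $1$-trivial fibration with cocone $K\in\mathcal{W}_1\cap\mathcal{F}_1$. Since $C\in\mathcal{C}_1={}^{\perp_1}(\mathcal{W}_1\cap\mathcal{F}_1)$, the triangle $K\to Z\to C$ splits, so $C$ is a direct summand of $Z$.

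By two-out-of-three in $\mathcal{M}_1$, $\iota$ is a $1$-weak equivalence. A $1$-cofibration that is a weak equivalence is a $1$-trivial cofibration, so $L\in\mathcal{C}_1\cap\mathcal{W}_1=\mathcal{C}_m\cap\mathcal{W}_2$ by Lemma 3.4. The $\xi_1$-triangle $C_2\to Z\to L\dashrightarrow$ has both ends in $\mathcal{C}_m$, because $C_2\in\mathcal{C}_2\subseteq\mathcal{C}_m$. Since $\mathcal{C}_m$ is closed under $\xi_1$-extensions, $Z\in\mathcal{C}_m$. Finally, $\mathcal{C}_m$ is a left orthogonal class, so it is closed under direct summands, and therefore $C\in\mathcal{C}_m$.

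\emph{Main obstacle.} The step I expect to be most delicate is the converse's claim that a $1$-cofibration which is a $1$-weak equivalence is a $1$-trivial cofibration, i.e.\ that its cone lies in $\mathcal{W}_1$. This is a standard model-category fact, via the retract argument (the cofibration $\iota$ is then a retract of the trivial cofibration in its own factorization), combined with the admissible description of trivial cofibrations in the Hovey correspondence. If invoking it directly is awkward, I would instead factor $w$ as a $1$-trivial cofibration followed by a $1$-fibration. The fibration is then trivial by two-out-of-three, which is the analogous standard fact for fibrations, and the rest of the argument runs unchanged.
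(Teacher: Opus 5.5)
Your proof is correct, but it splits the work differently from the paper.

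\textbf{Forward direction.} The paper uses the whole diagram of Lemma~\ref{lem:mixed-approximation}. Since $C\in\mathcal{C}_m$, the second row splits, so $F_m$ is a summand of $C_m$ and lies in $\mathcal{C}_m\cap\mathcal{W}_2\cap\mathcal{F}_1=\mathcal{C}_1\cap\mathcal{W}_1\cap\mathcal{F}_1$. Hence $q$ is a $1$-trivial fibration, $j$ is a $1$-trivial cofibration, and $p=qj$ is a $1$-weak equivalence. You use only the first row, i.e.\ completeness of $(\mathcal{C}_2,\mathcal{W}_2\cap\mathcal{F}_2)$, and then delegate to Proposition 3.9(5) and Proposition 3.10. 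Your version is shorter. The paper's is more self-contained and exhibits $p$ explicitly as a $1$-trivial cofibration followed by a $1$-trivial fibration.

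\textbf{Converse.} The paper works inside $\mathcal{M}_m$. It takes a special $\mathcal{C}_m$-precover $F\to C_m\xrightarrow{g}C$ and lifts $\alpha$ through $g$ using $\mathbb{E}_1(C_2,F)=0$. It then uses two-out-of-three and Proposition 3.10 to show that $g$ is a $1$-trivial fibration, so the precover splits. Your converse instead factors $w$ in $\mathcal{M}_1$ and uses only the following elementary facts:
\begin{itemize}
\item $\mathcal{C}_2\subseteq\mathcal{C}_m$;
\item the easy inclusion $\mathcal{C}_1\cap\mathcal{W}_1={}^{\perp_1}\mathcal{F}_1\subseteq\mathcal{C}_m$ (you cite Lemma 3.4's equality, but only this direction is needed);
\item closure of $\mathcal{C}_m$ under $\xi_1$-extensions and direct summands.
\end{itemize}
So it needs neither completeness of the mixed cotorsion pair nor Propositions 3.9 and 3.10, which makes it the more elementary argument. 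Note that Proposition 3.10 is used in your forward direction, not in the converse as your opening sentence announces.

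\textbf{Your ``main obstacle''.} This step is immediate and needs no retract argument. $\mathrm{TCoFib}$ is defined as $\mathrm{CoFib}\cap\mathrm{Weq}$, and admissibility condition (3) identifies it with the $\xi_1$-inflations whose cone lies in $\mathcal{C}_1\cap\mathcal{W}_1$. The paper uses the same fact for fibrations when it concludes $F\in\mathcal{W}_1\cap\mathcal{F}_1$.
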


\begin{proof}
	Assume first that \(A\in\mathcal{C}_m\). By Lemma 3.5, there exists a commutative diagram
	\[
	\xymatrix{
		F_2 \ar[r] \ar[d]
		& C_2 \ar[r]^{p} \ar[d]^{j}
		& A \ar@{=}[d]
		&
		\\
		F_m \ar[r] \ar[d]
		& C_m \ar[r]^{q} \ar[d]
		& A 
		& 
		\\
		C_1 \ar@{=}[r] 
		& C_1 
		&
		&
		\\
		{}
		& 
		&
		&
	}
	\]
	
	Since \(A\in\mathcal{C}_m\), the second row splits. Hence
	\[
	F_m\in
	\mathcal{W}_2\cap\mathcal{F}_1\cap\mathcal{C}_m
	=
	\mathcal{W}_1\cap\mathcal{F}_1\cap\mathcal{C}_1.
	\]
	Therefore \(q\) is a $1$-trivial fibration. From the second column, we know $j$ is a $1$-trivial cofibration, which means $j$ is also a $1$-weak equivalence, so $p: C_2 \to C$ is a $1$-weak equivalence. Furthermore, the first row shows that $p$ is a $\xi_2$-deflation with $C_2 \in \mathcal{C}_2$. This proves both the forward direction and the "Moreover" statement..
	
Conversely, suppose that \(C\in\mathcal{C}_1\) and there exists a $1$-weak equivalence
\(
\alpha:C_2\to C
\)
with
\(
C_2\in\mathcal{C}_2.
\)

Factor \(C\) in the mixed model structure to obtain the following diagram:

\[
\xymatrix{
	& F \ar[d]
	&
	&
	\\
	& C_m \ar[d]^{g}
	&
	&
	\\
	C_2 \ar[r]^{\alpha} \ar@{-->}[ur]^{f}
	& C
	&
	&
	\\
	& 
	&
	&
}
\]
where
\(
F\in\mathcal{W}_2\cap\mathcal{F}_1
\)
and
\(
C_m\in\mathcal{C}_m.
\)

Since \(g\) is an $m$-trivial fibration, Proposition 3.9 (4) implies that \(g\) is also a $2$-trivial fibration, and hence a $2$-weak equivalence.

Now,
\(
\mathbb{E}_{1}(C_2,F)=0
\)
because
\(
C_2\in\mathcal{C}_2\subseteq\mathcal{C}_m
\)
and
\(
F\in\mathcal{W}_2\cap\mathcal{F}_1.
\)
Hence there exists a morphism
\(
f:C_2\to C_m
\)
such that
\(
gf=\alpha.
\)

Since both \(g\) and \(\alpha\) are $2$-weak equivalences, so is \(f\). By Proposition 3.10, \(f\) is in fact a $1$-weak equivalence. Therefore \(g\) is a $1$-trivial fibration, which implies
\[
F\in\mathcal{W}_1\cap\mathcal{F}_1.
\]

Thus the $\xi_1$-triangle

	\[
\xymatrix@C=2em{
	F \ar[r]^i &
	C_m \ar[r] &
	C\ar@{-->}[r] &
	{}
}
\]
splits. Consequently, \(C\) is a direct summand of \(C_m\). Since \(\mathcal{C}_m\) is closed under direct summands, we conclude that
\(
C\in\mathcal{C}_m.
\)
This completes the proof.

\end{proof}

\section{Applications}
As extriangulated categories simultaneously generalize exact categories and triangulated categories, our main results naturally recover and extend the corresponding versions of Cole's Theorem in these specific settings.
\subsection*{Exact Category Case}
\begin{prop}\rm{\cite{Gillespie_2025}}
	[Cole's Theorem for Exact Model Structures]\label{prop:key_prop1}
	Let $(\mathcal{A},\mathcal{E})$ be a weakly idempotent complete exact category with two exact substructures $\mathcal{E}_1$ and $\mathcal{E}_2$ satisfying
	$
	\mathcal{E}_1 \subseteq \mathcal{E}_2.
	$
	Assume that
	$
	\mathcal{M}_1=(\mathcal{C}_1,\mathcal{W}_1,\mathcal{F}_1)
	$
	is an exact model structure on $(\mathcal{A},\mathcal{E}_1)$, and
	$
	\mathcal{M}_2=(\mathcal{C}_2,\mathcal{W}_2,\mathcal{F}_2)
	$
	is an exact model structure on $(\mathcal{A},\mathcal{E}_2)$, such that
	$
	\mathcal{W}_1\subseteq \mathcal{W}_2.
	$
	
	\begin{enumerate}
		\item
		If
		$
		\mathcal{F}_1\subseteq \mathcal{F}_2,
		$
		then there exists an exact model structure
		$
		\mathcal{M}_m=(\mathcal{C}_m,\mathcal{W}_2,\mathcal{F}_1)
		$
		on $(\mathcal{A},\mathcal{E}_1)$, having the same trivial objects as $\mathcal{M}_2$ and the same fibrant objects as $\mathcal{M}_1$.
		
		\item
		If
		$
		\mathcal{C}_1\subseteq \mathcal{C}_2,
		$
		then there exists an exact model structure
		$
		\mathcal{M}_m=(\mathcal{C}_1,\mathcal{W}_2,\mathcal{F}_m)
		$
		on $(\mathcal{A},\mathcal{E}_1)$, having the same trivial objects as $\mathcal{M}_2$ and the same cofibrant objects as $\mathcal{M}_1$.
	\end{enumerate}
	
	In either case, we call $\mathcal{M}_m$ the \emph{mixed model structure} of $\mathcal{M}_1$ and $\mathcal{M}_2$.
	\end{prop}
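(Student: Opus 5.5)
This proposition should be deduced as a specialization of Theorem~\ref{prop:key_prop}, not reproved from scratch. Let $(\mathcal{A},\mathcal{E})$ be a weakly idempotent complete exact category. We regard it as an extriangulated category $(\mathcal{A},\mathbb{E},\mathfrak{s})$ by taking $\mathbb{E}(C,A)$ to be the group of equivalence classes of conflations $A\to B\to C$ in $\mathcal{E}$ (Yoneda Ext$^1$), with $\mathfrak{s}$ the tautological realization, as in \cite{NP}. Weak idempotent completeness in the exact sense (split monomorphisms have cokernels) is exactly condition (1) of Condition 5.8, so this extriangulated category is weakly idempotent complete.

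The first step is to identify exact substructures with proper classes. An exact substructure $\mathcal{E}_i\subseteq\mathcal{E}$ is a class of conflations that is itself an exact structure on $\mathcal{A}$. I would check that the corresponding class $\xi_i$ of $\mathbb{E}$-triangles is proper. It contains the split sequences and is closed under finite direct sums, since split conflations belong to any exact structure and exact structures are closed under direct sums. Closure under base and cobase change holds because pullbacks of deflations along arbitrary maps, and pushouts of inflations along arbitrary maps, exist within $\mathcal{E}_i$ and are computed in $\mathcal{A}$, so they coincide with the $\mathbb{E}$-triangles $c^*\delta$ and $a_*\delta$. For saturation, the remark after Definition~\ref{def:proper class} reduces the claim to a composition/cancellation statement. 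In the diagram of Lemma~\ref{lem1}(1), $y_1e_2=y_2e_1$ and $e_1$, $y_2$ are $\mathcal{E}_i$-deflations, so $y_1e_2$ is an $\mathcal{E}_i$-deflation. Then, by the obscure axiom in its weakly idempotent complete form (B\"uhler, Prop.~7.6: if a composite $de$ is a deflation, so is $d$), $y_1$ is an $\mathcal{E}_i$-deflation. This requires weak idempotent completeness of $(\mathcal{A},\mathcal{E}_i)$, which follows from that of $\mathcal{A}$, because the condition concerns only split morphisms in $\mathcal{A}$. The inclusion $\mathcal{E}_1\subseteq\mathcal{E}_2$ gives $\xi_1\subseteq\xi_2$, and $\mathbb{E}_{\xi_i}$ is Ext$^1$ computed in $\mathcal{E}_i$.

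The second step is to match the model-theoretic notions. An exact model structure on $(\mathcal{A},\mathcal{E}_i)$ in Gillespie's sense has cofibrations equal to the $\mathcal{E}_i$-inflations with cofibrant cokernel, fibrations equal to the $\mathcal{E}_i$-deflations with fibrant kernel, and analogously for the trivial ones. These are precisely conditions (1)--(4) of an admissible model structure relative to $\xi_i$, since Cone and CoCone are cokernel and kernel. Equivalently, both kinds of structure correspond to Hovey triples relative to $\xi_i$, by Gillespie's exact Hovey correspondence and by the Corollary following Theorem~\ref{thm:hovey}. Hence $\mathcal{M}_1$ and $\mathcal{M}_2$ are admissible relative to $\xi_1$ and $\xi_2$ respectively, with the same classes $\mathcal{C}_i,\mathcal{W}_i,\mathcal{F}_i$.

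Finally, applying Theorem~\ref{prop:key_prop}(1), respectively (2), produces an admissible model structure relative to $\xi_1$ with Hovey triple $(\mathcal{C}_m,\mathcal{W}_2,\mathcal{F}_1)$, respectively $(\mathcal{C}_1,\mathcal{W}_2,\mathcal{F}_m)$. Translating back through the first two steps, this is an exact model structure on $(\mathcal{A},\mathcal{E}_1)$ with the stated trivial and fibrant (respectively cofibrant) objects. The main obstacle is the saturation verification in the first step. It is the only place where the exact-category axioms must be used nontrivially, and it relies on the obscure axiom holding in $(\mathcal{A},\mathcal{E}_i)$. If that argument turned out to be delicate, I would instead cite \cite{hu2020proper}, which, via its Theorem 3.2, establishes the correspondence between proper classes and extriangulated substructures.
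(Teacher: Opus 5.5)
Your proposal is correct and follows the paper's route: the paper gives no separate argument and presents this proposition as the exact-category specialization of Theorem~\ref{prop:key_prop}. Your checks that $\mathcal{E}_1\subseteq\mathcal{E}_2$ yield proper classes $\xi_1\subseteq\xi_2$, and that exact model structures on $(\mathcal{A},\mathcal{E}_i)$ are exactly the admissible model structures relative to $\xi_i$, make explicit what the paper leaves implicit. One minor point is that the saturation step does not need weak idempotent completeness: $y_1$ is already an $\mathcal{E}$-deflation and so has a kernel, and the original form of the obscure axiom then shows that $y_1$ is an $\mathcal{E}_i$-deflation.
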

	\begin{prop}[{Cofibrant objects in mixed exact model structures, \cite{Gillespie_2025}}]
		An object $C$ is in $\mathcal{C}_m$ if and only if $C \in \mathcal{C}_1$ and there is an $1$-weak equivalence $C_2 \to C$ for some $C_2 \in \mathcal{C}_2$. 
		
		In fact, for any $C \in \mathcal{C}_m$, there exists an admissible $2$-epic morphism $p: C_2 \twoheadrightarrow C$ that is an $1$-weak equivalence.
	\end{prop}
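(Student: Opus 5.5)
The plan is to deduce this statement from Proposition~\ref{prop:cofibrant-characterization} by specializing the extriangulated setting to exact categories. A weakly idempotent complete exact category $(\mathcal{A},\mathcal{E})$ is a weakly idempotent complete extriangulated category, with $\mathbb{E}=\mathrm{Ext}^1_{\mathcal{E}}$ and $\mathfrak{s}$ sending an extension class to its admissible short exact sequence. The exact substructures $\mathcal{E}_1\subseteq\mathcal{E}_2\subseteq\mathcal{E}$ are exactly proper classes $\xi_1\subseteq\xi_2$ of $\mathbb{E}$-triangles in the sense of Definition~\ref{def:proper class}. By \cite[Theorem 3.2]{hu2020proper}, a class of conflations is proper precisely when its restriction is again an extriangulated, hence exact, structure on $\mathcal{A}$.

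First I check this dictionary. Closure under finite coproducts and containment of split sequences are part of the exact-substructure axioms. Closure under base and cobase change is the pullback/pushout axiom of an exact structure. Saturation follows from the fact that in an exact category a composite of admissible epics is admissible epic, together with the standard obscure axiom.

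Next I identify the model-theoretic notions. An exact model structure on $(\mathcal{A},\mathcal{E}_i)$ in Gillespie's sense has cofibrations given by admissible monics with cofibrant cokernel and fibrations given by admissible epics with fibrant kernel. These are exactly the admissible model structures relative to $\xi_i$, since $\mathrm{Cone}$ is the cokernel and $\mathrm{CoCone}$ is the kernel. Weak equivalences match through Corollary 2.13 and Gillespie's Hovey correspondence. The main point needing care is that an ``admissible $2$-epic morphism'' is a deflation in $\mathcal{E}_2$, i.e.\ a $\xi_2$-deflation.

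With these identifications in place, the hypotheses of Theorem~\ref{prop:key_prop}(1) hold: $\mathcal{W}_1\subseteq\mathcal{W}_2$ and $\mathcal{F}_1\subseteq\mathcal{F}_2$. So $\mathcal{M}_m$ is the mixed model structure constructed there, with $\mathcal{C}_m={}^{\perp_1}(\mathcal{W}_2\cap\mathcal{F}_1)$ computed with $\mathrm{Ext}^1_{\mathcal{E}_1}$. Proposition~\ref{prop:cofibrant-characterization} then gives both assertions verbatim. Its forward direction and ``moreover'' part come from the first row of Lemma~\ref{lem:mixed-approximation}, which is an admissible $\mathcal{E}_2$-epic $C_2\twoheadrightarrow C$. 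Its converse comes from the lifting argument combined with Proposition 3.10.

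I expect the only real obstacle to be confirming that $\mathcal{C}_m$ here coincides with the class in Gillespie's mixed structure. Since both structures share $\mathcal{W}_2$ and $\mathcal{F}_1$ relative to $\mathcal{E}_1$, the Hovey correspondence forces $\mathcal{C}_m={}^{\perp_1}(\mathcal{W}_2\cap\mathcal{F}_1)$ in both, so the two classes agree.
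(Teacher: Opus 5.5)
Your proposal is correct and takes the same route as the paper: the paper gives no separate argument for the exact case and obtains it by specializing Proposition~\ref{prop:cofibrant-characterization}, with the exact substructures $\mathcal{E}_1\subseteq\mathcal{E}_2$ regarded as proper classes $\xi_1\subseteq\xi_2$ on a weakly idempotent complete exact category, exactly as you do. Your proposal checks details that the paper leaves implicit: saturation via composites of admissible epics and the obscure axiom, admissible $\mathcal{E}_2$-epics as $\xi_2$-deflations, and agreement of $\mathcal{C}_m$ with Gillespie's class because the Hovey correspondence forces $\mathcal{C}_m={}^{\perp_1}(\mathcal{W}_2\cap\mathcal{F}_1)$.
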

	\subsection*{Triangulated Category Case}
	
	\begin{prop}[Cole's Theorem for Triangulated Model Structures]
	Let $(\mathcal{T},[1],\triangle)$ be a triangulated category, and let
	$\xi_1\subseteq\xi_2$ be two proper classes of triangles in $\mathcal{T}$.
		Assume that
		$
		\mathcal{M}_1=(\mathcal{C}_1,\mathcal{W}_1,\mathcal{F}_1)
		$
		is a triangulated model structure on $(\mathcal{T},\xi_1)$, and
		$
		\mathcal{M}_2=(\mathcal{C}_2,\mathcal{W}_2,\mathcal{F}_2)
		$
		is a triangulated model structure on $(\mathcal{T},\xi_2)$, such that
		$
		\mathcal{W}_1\subseteq \mathcal{W}_2.
		$
		
		\begin{enumerate}
			\item
			If
			$
			\mathcal{F}_1\subseteq \mathcal{F}_2,
			$
			then there exists a triangulated model structure
			$
			\mathcal{M}_m=(\mathcal{C}_m,\mathcal{W}_2,\mathcal{F}_1)
			$
			on $(\mathcal{T},\xi_1)$, having the same trivial objects as $\mathcal{M}_2$ and the same fibrant objects as $\mathcal{M}_1$.
			
			\item
			If
			$
			\mathcal{C}_1\subseteq \mathcal{C}_2,
			$
			then there exists a triangulated model structure
			$
			\mathcal{M}_m=(\mathcal{C}_1,\mathcal{W}_2,\mathcal{F}_m)
			$
			on $(\mathcal{T},\xi_1)$, having the same trivial objects as $\mathcal{M}_2$ and the same cofibrant objects as $\mathcal{M}_1$.
		\end{enumerate}
		
		In either case, we call $\mathcal{M}_m$ the \emph{mixed model structure} of $\mathcal{M}_1$ and $\mathcal{M}_2$.
	\end{prop}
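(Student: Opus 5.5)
The plan is to obtain this statement as a direct specialization of Theorem~\ref{prop:key_prop}, so the work consists of checking that the triangulated data fit the extriangulated hypotheses. First, regard $(\mathcal{T},[1],\triangle)$ as an extriangulated category in the standard way of \cite{NP}: set $\mathbb{E}(C,A)=\mathcal{T}(C,A[1])$, and let $\mathfrak{s}$ send $\delta$ to the class of the triangle $A\to B\to C\xrightarrow{\delta}A[1]$. Under this identification, proper classes of triangles in the sense of the triangulated literature are exactly proper classes of $\mathbb{E}$-triangles in the sense of Definition~\ref{def:proper class}. Indeed, closure under base and cobase change, finite coproducts, containment of split triangles, and saturation translate verbatim through Lemma~\ref{lem1}, which in the triangulated case is just the octahedral axiom. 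In the same way, a triangulated model structure on $(\mathcal{T},\xi)$ is an admissible model structure relative to $\xi$. Its (co)fibrations are $\xi$-inflations and $\xi$-deflations with (co)cones in the appropriate classes, and it corresponds to a Hovey triple relative to $\xi$.

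Next, verify weak idempotent completeness. A triangulated category is always weakly idempotent complete in the sense of Lemma 2.10. For a split monomorphism $s\colon A\to B$ with retraction $r$, the triangle on $s$ has the form $A\to B\to C\xrightarrow{0}A[1]$ because $s$ is a section. Hence $B\cong A\oplus C$, and the map $B\to C$ is a cokernel of $s$. So condition (1) of Lemma 2.10 holds with no extra assumption. Proposition~\ref{prop:wic-relative} then gives weak idempotent completeness for $(\mathcal{T},\mathbb{E}_{\xi_i},\mathfrak{s}_{\xi_i})$ as well, so the corollary to Theorem~\ref{thm:hovey} applies to both $\xi_1$ and $\xi_2$.

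With these identifications, the hypotheses $\xi_1\subseteq\xi_2$, $\mathcal{W}_1\subseteq\mathcal{W}_2$, and $\mathcal{F}_1\subseteq\mathcal{F}_2$ (respectively $\mathcal{C}_1\subseteq\mathcal{C}_2$) are exactly those of Theorem~\ref{prop:key_prop}. That theorem yields an admissible model structure $(\mathcal{C}_m,\mathcal{W}_2,\mathcal{F}_1)$, respectively $(\mathcal{C}_1,\mathcal{W}_2,\mathcal{F}_m)$, relative to $\xi_1$. Translating back, this is a triangulated model structure on $(\mathcal{T},\xi_1)$.

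The only step needing care is the dictionary between triangulated model structures relative to $\xi$ and admissible model structures relative to $\xi$. I would argue it through Hovey triples, since both notions are characterized by $\xi$-relative complete cotorsion pairs $(\mathcal{C}\cap\mathcal{W},\mathcal{F})$ and $(\mathcal{C},\mathcal{F}\cap\mathcal{W})$ together with thickness of $\mathcal{W}$ along $\xi$-triangles. This avoids comparing the morphism classes directly.
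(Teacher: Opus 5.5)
Your proposal is correct and takes essentially the paper's route. The triangulated statement is Theorem~\ref{prop:key_prop} applied to $\mathcal{T}$ viewed as an extriangulated category with $\mathbb{E}=\mathcal{T}(-,-[1])$, which is automatically weakly idempotent complete (as you check), with the $\xi_i$ regarded as proper classes of $\mathbb{E}$-triangles and triangulated model structures on $(\mathcal{T},\xi_i)$ read as admissible model structures relative to $\xi_i$.

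One small correction: you only need the direction ``proper class of triangles $\Rightarrow$ proper class of $\mathbb{E}$-triangles'' (cf.\ \cite[Remark 3.3]{hu2020proper}). Your claim that the two notions coincide is too strong, because the usual triangulated definition also requires closure under suspension, which is neither part of the extriangulated definition nor a consequence of it.
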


\begin{prop}[Cofibrant objects in mixed triangulated model structures]
	An object $C$ belongs to $\mathcal{C}_m$ if and only if $C\in\mathcal{C}_1$ and there exists a $1$-weak equivalence $C_2 \to C$ for some $C_2\in\mathcal{C}_2$.
	
	Moreover, for every $C\in\mathcal{C}_m$, there exists a $\xi_2$-deflation $p:C_2\to C$ which is a $1$-weak equivalence.
\end{prop}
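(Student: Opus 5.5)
The plan is to obtain this as a direct specialization of Proposition~\ref{prop:cofibrant-characterization}. A triangulated category $(\mathcal{T},[1],\triangle)$ is an extriangulated category with $\mathbb{E}(C,A)=\mathcal{T}(C,A[1])$ and $\mathfrak{s}$ sending $\delta$ to the class of triangles $A\to B\to C\xrightarrow{\delta}A[1]$. Proper classes of triangles $\xi_1\subseteq\xi_2$ are then proper classes of $\mathbb{E}$-triangles in the sense of Definition~\ref{def:proper class}. A triangulated model structure on $(\mathcal{T},\xi_i)$ is by definition an admissible model structure relative to $\xi_i$ on this extriangulated category.

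The first point to verify is weak idempotent completeness, which is needed both for the correspondence of Theorem~\ref{thm:hovey} and for Proposition~\ref{prop:cofibrant-characterization}. In a triangulated category every split monomorphism $s\colon X\to Y$ with retraction $r$ has a cokernel. Indeed, one completes $s$ to a triangle $X\xrightarrow{s}Y\xrightarrow{t}Z\xrightarrow{w}X[1]$. Since $s$ is a section, $w=0$, so the triangle splits and $Y\cong X\oplus Z$ with $t$ the projection. Then $t$ is a cokernel of $s$, and condition (1) of Lemma~2.10 holds. Hence Proposition~\ref{prop:wic-relative} applies to each $\xi_i$.

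Next I check that the hypotheses of the preceding triangulated Cole theorem match those of Theorem~\ref{prop:key_prop}(1). Then $\mathcal{M}_m=(\mathcal{C}_m,\mathcal{W}_2,\mathcal{F}_1)$ is exactly the mixed structure built there, with $\mathcal{C}_m={}^{\perp_1}(\mathcal{W}_2\cap\mathcal{F}_1)$. The notions of $1$-weak equivalence and $\xi_2$-deflation are the same in both languages. A $\xi_2$-deflation here is just the second morphism of a triangle in $\xi_2$.

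Both directions and the ``moreover'' clause then follow verbatim from Proposition~\ref{prop:cofibrant-characterization}. For the forward direction, the diagram of Lemma~\ref{lem:mixed-approximation} has a split second row, so $F_m\in\mathcal{W}_1\cap\mathcal{F}_1\cap\mathcal{C}_1$ and $p=qj$ is a $1$-weak equivalence that is a $\xi_2$-deflation. For the converse, we lift $\alpha\colon C_2\to C$ through an $m$-trivial fibration $g\colon C_m\to C$. By Proposition~3.10 the lift is a $1$-weak equivalence, so $\mathrm{CoCone}(g)\in\mathcal{W}_1\cap\mathcal{F}_1$, and the triangle splits because $C\in\mathcal{C}_1$.

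The only genuine obstacle is the bookkeeping in the first step. One must ensure that ``triangulated model structure on $(\mathcal{T},\xi)$'' is exactly the admissible notion relative to $\xi$, with cofibrations being $\xi$-inflations whose cones are cofibrant. If the paper's convention differs, one would first use the Hovey correspondence (Corollary~2.13) to translate it into a Hovey triple relative to $\xi$.
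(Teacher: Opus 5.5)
Your proposal is correct and takes the same approach as the paper: the paper gives no separate argument here and treats the statement as the triangulated specialization of Proposition~\ref{prop:cofibrant-characterization}, through the extriangulated structure that a proper class of triangles induces. Your check that triangulated categories are weakly idempotent complete, and your recap of the forward and converse arguments (splitting of the second row in Lemma~\ref{lem:mixed-approximation}; lifting $\alpha$ through the $m$-trivial fibration and applying Proposition~3.10), make explicit details the paper leaves implicit.
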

\begin{remark}
	Let $(\mathcal{T},[1],\triangle)$ be a triangulated category, and suppose
	that $\xi_1$ is a proper class of triangles, for instance the class of pure
	triangles. By \cite[Remark 3.3]{hu2020proper}, the proper class $\xi_1$ induces an
	extriangulated structure
	\[
	(\mathcal{T},\mathbb{E}_{\xi_1},\mathfrak{s}_{\xi_1})
	\]
	on $\mathcal{T}$. Therefore, when $\mathcal{M}_m$ is constructed with
	respect to $\xi_1$, it may also be regarded as an admissible model structure
	on this induced extriangulated category. Thus the triangulated case recovers,
	in particular, extriangulated model structures coming from proper classes
	such as the class of pure triangles.
\end{remark}


\begin{remark}[Boundary cases]
\label{rem:boundary-proper-classes}
Two extremal choices of proper classes give immediate, but degenerate,
instances of Theorem~\ref{prop:key_prop}. First, if
$\xi_1=\xi_{\rm pur}$ is the class of pure triangles and
$\xi_2=\Delta$ is the class of all distinguished triangles, then the
model structures
\[
\mathcal M_1=(\mathcal P_{\rm pur},\mathcal T,\mathcal T)
\qquad\text{and}\qquad
\mathcal M_2=(0,\mathcal T,\mathcal T)
\]
yield $\mathcal M_m=\mathcal M_1$. Second, if
$\xi_1=\xi_{\rm spl}$ is the class of split triangles and
$\xi_2=\xi_{\rm pur}$, then
\[
\mathcal M_1=(\mathcal T,\mathcal T,\mathcal T)
\qquad\text{and}\qquad
\mathcal M_2=(\mathcal P_{\rm pur},\mathcal T,\mathcal T)
\]
again yield $\mathcal M_m=\mathcal M_1$. These boundary cases indicate
that genuinely nontrivial examples should involve intermediate proper
classes
\[
\xi_1\subsetneq\xi_2\subsetneq\Delta.
\]
\end{remark}
\begin{example}[A non-degenerate mixed model structure from smashing direct summands]
	\label{ex:smashing-summands}
	
	Let $k$ be a field, let
	\[
	B=k[\varepsilon]/(\varepsilon^2),
	\qquad
	R=B^4,
	\]
	and put
	\[
	\mathcal U=D(\operatorname{Mod}B),
	\qquad
	\mathcal T=D(\operatorname{Mod}R)\simeq\mathcal U^4.
	\]
	
	For $S\subseteq\{1,2,3,4\}$, let
	\[
	Q_S:\mathcal T\longrightarrow\prod_{i\notin S}\mathcal U
	\]
	be the coordinate projection. Its kernel is a compactly generated
	smashing direct summand of $\mathcal T$; see
	\cite{KrauseSmashing}. Let $\xi_S$ be the class of distinguished
	triangles sent by $Q_S$ to split triangles. Equivalently, a triangle
	belongs to $\xi_S$ precisely when its $i$-th component is arbitrary for
	$i\in S$ and split for $i\notin S$. Thus $\xi_S$ is a proper class, and
	\[
	\mathbb E_{\xi_S}(X,Y)
	\cong
	\prod_{i\in S}
	\operatorname{Hom}_{\mathcal U}(X_i,Y_i[1]).
	\]
	
	Set
	\[
	S_1=\{1,2\},
	\qquad
	S_2=\{1,2,3\},
	\qquad
	\xi_1=\xi_{S_1},
	\qquad
	\xi_2=\xi_{S_2}.
	\]
	The non-split triangle induced by
	\[
	0\longrightarrow k\longrightarrow B\longrightarrow k\longrightarrow0
	\]
	in $\mathcal U$, placed respectively in the third and fourth
	coordinates, shows that
	\[
	\xi_1\subsetneq\xi_2\subsetneq\Delta,
	\]
	where $\Delta$ denotes the class of all distinguished triangles in
	$\mathcal T$.
	
	For convenience, denote by
	\[
	\mathcal A=(\mathcal U,0,\mathcal U),
	\qquad
	\mathcal B=(0,\mathcal U,\mathcal U)
	\]
	the elementary admissible model structures relative to all
	distinguished triangles in $\mathcal U$, and by
	\[
	\mathcal S=(\mathcal U,\mathcal U,\mathcal U)
	\]
	the admissible model structure relative to the split triangles.
	Taking products, define
	\[
	\begin{aligned}
		\mathcal M_1
		&=\mathcal A\times\mathcal A\times\mathcal S\times\mathcal S \\
		&=
		\bigl(
		\mathcal U^4,\,
		0\times0\times\mathcal U\times\mathcal U,\,
		\mathcal U^4
		\bigr)
	\end{aligned}
	\]
	relative to $\xi_1$, and
	\[
	\begin{aligned}
		\mathcal M_2
		&=\mathcal B\times\mathcal A\times\mathcal B\times\mathcal S \\
		&=
		\bigl(
		0\times\mathcal U\times0\times\mathcal U,\,
		\mathcal U\times0\times\mathcal U\times\mathcal U,\,
		\mathcal U^4
		\bigr)
	\end{aligned}
	\]
	relative to $\xi_2$. Hence
	\[
	\mathcal W_1
	=
	0\times0\times\mathcal U\times\mathcal U
	\subsetneq
	\mathcal U\times0\times\mathcal U\times\mathcal U
	=
	\mathcal W_2,
	\qquad
	\mathcal F_1=\mathcal F_2=\mathcal U^4.
	\]
	Therefore Theorem~\ref{prop:key_prop}(1) applies.
	
	Using the above description of $\mathbb E_{\xi_1}$, we obtain
	\[
	\begin{aligned}
		\mathcal C_m
		&={}^{\perp_1}
		(\mathcal W_2\cap\mathcal F_1) \\
		&={}^{\perp_1}
		(\mathcal U\times0\times\mathcal U\times\mathcal U) \\
		&=0\times\mathcal U\times\mathcal U\times\mathcal U.
	\end{aligned}
	\]
	Thus the mixed model structure is
	\[
	\boxed{
		\mathcal M_m=
		\bigl(
		0\times\mathcal U\times\mathcal U\times\mathcal U,\,
		\mathcal U\times0\times\mathcal U\times\mathcal U,\,
		\mathcal U^4
		\bigr).
	}
	\]
	
	This model structure differs from both $\mathcal M_1$ and
	$\mathcal M_2$, and its class of trivial objects is a nonzero proper
	subcategory of $\mathcal T$. A morphism is an
	$\mathcal M_m$-weak equivalence precisely when its second component is
	an isomorphism. Consequently,
	\[
	\operatorname{Ho}(\mathcal M_m)
	\simeq
	\mathcal U
	=
	D\bigl(\operatorname{Mod}(k[\varepsilon]/(\varepsilon^2))\bigr)
	\neq0.
	\]
\end{example}

\section*{Acknowledgements}

The authors thank OpenAI's ChatGPT (GPT-5.6 Pro) for suggesting the
non-degenerate example involving smashing direct summands and for
assistance with the linguistic revision of the manuscript. All mathematical
content was independently verified by the authors.

\bibliographystyle{plain}
\bibliography{Mixing.bib}

\vspace{1cm}

\begin{flushleft}
	\textbf{Junpeng Ren}\\
	
	School of Mathematics and Statistics, Northeast Normal University\\
	Changchun 130024, China\\
	E-mail: renjp@nenu.edu.cn
\end{flushleft}

\begin{flushleft}
	\textbf{Xianhui Fu}\\
	
	School of Mathematics and Statistics, Northeast Normal University\\
	Changchun 130024, China\\
	E-mail: fuxianhui@gmail.com
\end{flushleft}

\end{document}